\newfont{\cyr}{wncyr10 scaled 1100}
\pretocmd\mvchr{\text}{}{\errmessage{Patching \noexpand\mvchr failed}}
\pretocmd\textmvs{\text}{}{\errmessage{Patching \noexpand\textmvs failed}}
\theoremstyle{plain}
\newtheorem{theorem}{Theorem}[section]
\newtheorem{lemma}[theorem]{Lemma}
\newtheorem{proposition}[theorem]{Proposition}
\newtheorem{conjecture}[theorem]{Conjecture}
\theoremstyle{definition}
\newtheorem{definition}[theorem]{Definition}
\theoremstyle{remark}
\newtheorem{obswr}[theorem]{Observation}
\newtheorem{remarkwr}[theorem]{Remark}
\newenvironment{remark}{\begin{remarkwr}\begin{upshape}}{\end{upshape}\end{remarkwr}}
\newcommand{\sD}{{\mathscr{D}}}
\newcommand{\bb}{\mathbb}
\newcommand{\scr}{\mathscr}
\newcommand{\mrm}{\mathrm}
\newcommand*{\bfcdot}{\scalebox{0.7}{$\bullet$}}
\newcommand{\cS}{{\mathcal S}}
\newcommand{\cT}{{\mathcal T}}
\newcommand{\cA}{\mathcal A}
\newcommand{\cW}{\mathcal W}
\newcommand{\cH}{\mathcal H}
\newcommand{\cC}{\mathcal C}
\newcommand{\cE}{\mathcal E}
\newcommand{\Q}{\mathbb{Q}}
\newcommand{\Z}{\mathbb{Z}}
\newcommand{\F}{\mathbb{F}}
\newcommand{\C}{\mathbb{C}}
\newcommand{\PP}{\mathbb{P}}
\newcommand{\GL}{\mathrm{GL}}
\newcommand{\ord}{{\mathrm{ord}}}
\newfont{\gotip}{eufb10 at 12pt}
\newcommand{\cO}{{\mathcal O}}
\newcommand{\lra}{\longrightarrow}
\newcommand{\NS}{\mathcal{Q}}
\newcommand{\SL}{{\mathrm {SL}}}
\newcommand{\R}{{\mathbb R}}
\newcommand{\res}{\mathrm{res}}
\def\lra{{\longrightarrow}}
\def\SL{{\bf SL}}
\def\GL{{\bf GL}}
\def\rp1{r\!\!+\!\!1}
\def\XXint#1#2#3{{\setbox0=\hbox{$#1{#2#3}{\int}$}%
\vcenter{\hbox{$#2#3$}}\kern-.5\wd0}}%
\begin{document}

\title[Mock plectic points]{Mock plectic points}

\author{Henri Darmon  and Michele Fornea}

\dedicatory{To Jan Nekov\'a$\check{\text{r}}$}

\begin{abstract}
A $p$-arithmetic subgroup of $\SL_2(\Q)$  like the Ihara group $\Gamma := \SL_2(\Z[1/p])$  acts    by M\"obius transformations on the   Poincar\'e upper half plane
$\cH$ and on  Drinfeld's $p$-adic upper half plane $\cH_p := \PP_1(\C_p)-\PP_1(\Q_p)$.  The diagonal action of $\Gamma$ on the product is discrete, and 
the quotient $\Gamma\backslash(\cH_p\times \cH)$ can be envisaged as a ``mock Hilbert modular surface". 
According to a  striking prediction of Nekov\'a$\check{\text{r}}$ and Scholl, the 
CM points on genuine Hilbert modular surfaces should give rise to ``plectic Heegner points" that encode non-trivial regulators attached, notably,  to elliptic curves of rank two over real quadratic fields.
This article  develops the analogy between Hilbert modular surfaces  and their mock counterparts, with the aim of transposing    the plectic philosophy  
 to the mock Hilbert setting, where the analogous 
plectic invariants are expected to lie in the alternating square of the Mordell--Weil group of certain elliptic curves of rank two over $\Q$.
     \end{abstract}

\address{H. D.: McGill University, Montreal, Canada}
\email{darmon@math.mcgill.ca}
\address{M.F.: CRM, Barcelona}
\email{mfornea.research@gmail.com}
 

\maketitle

\tableofcontents

\section*{Introduction}
The Hilbert modular group 
$\SL_2(\cO_F)$
 attached to  a real quadratic field $F$, viewed as  a discrete subgroup of 
 $\SL_2(\R)\times \SL_2(\R)$ by  ordering the  real 
  embeddings 
  $\nu_1,\nu_2\colon F\to \R$,
 acts  discretely by M\"obius transformations  on the product
$\cH\times\cH$ of two Poincar\'e  upper half planes. The  cohomology 
of the complex surface 
$$ \cS_F:= \SL_2(\cO_F)\backslash (\cH\times \cH) $$
 is  intimately tied with the arithmetic of elliptic curves with everywhere
 good reduction over $F$.
More precisely, if
 $E_{/F}$ is such a (modular) elliptic curve,
$ E_j :=E\otimes_{F,\nu_j}\R$ are the associated real elliptic curves for $j=1,2$,
  and  $\pi_E$ is the associated automorphic representation of $\GL_2(F)$, Oda's period conjecture predicts  an isomorphism 
\begin{equation}
\label{eqn:oda}
H^2(\cS_F, \Q)[\pi_E] \simeq
 H^1(E_1,\Q)\otimes H^1(E_2,\Q) 
 \end{equation}
 of rational Hodge structures \cite{oda}.  
 This strong ``geometric" form of modularity
 has implications  for the arithmetic of $E_{/F}$ that  are richer, more subtle and less well understood than those that arise  from realising $E$ as a quotient of the jacobian of a Shimura curve. 
 For instance, let 
   $K\subset \mathrm{M}_2(F)\subset M_2(\R) \times M_2(\R)$ be a 
   quadratic extension
of $F$ which is 
  ``Almost Totally Real" (ATR), i.e.,
   satisfies 
   \[
   K\otimes_{F,\nu_1} \R = \C, \qquad K\otimes_{F,\nu_2} \R =\R \oplus \R.
   \]
 Let 
  $\tau_1\in \cH$  be the fixed point for 
   the action of $\nu_1(K^\times)\subset \GL_2(\R)$
    on $\PP_1(\C)$, and let
 $\tau_2,\tau_2'\in \R$ be the fixed points of
 $\nu_2(K^\times)$.  Denote by
  $(\tau_2,\tau_2')$  the hyperbolic geodesic in $\cH$ joining $\tau_2$ to $\tau_2'$,  and let
    $\gamma \subset \cS_F$
 be the simple  closed geodesic  contained in the image of 
 $$\{\tau_1\} \times (\tau_2,\tau_2')\subset \cH\times \cH.$$  The finiteness of $H_1(\cS_F,\Z)$ 
 implies there there is an integer $m\ge1$ and a smooth real  two-dimensional region
  $\Pi\subset \cS_F$ having   $m\gamma$ as its boundary. 
 Oda's period conjecture     \eqref{eqn:oda} 
 implies that, for a suitable
  real analytic two-form  $\omega \in \Omega^2(\cS_F)[\pi_E]$,
   the complex integral  
\begin{equation}
\label{eqn:logan}
P_\gamma :=  \frac{1}{m}\int_{\Pi} \omega \  \in \ \C 
  \end{equation}
   is  independent of the choice of  $\Pi$ 
  up to elements in a suitable  period lattice  $\Lambda_{1}$ attached to $E_1$.
Viewing  \eqref{eqn:logan} as an element 
    of  $\C/\Lambda_{1} \cong E_1(\C)$, 
the complex point  $P_\gamma$  is conjectured to be  defined  over an explicit  ring class field  of $K$, following a  numerical recipe that is worked out and tested experimentally in \cite{darmon-logan}  and \cite{ComputATR}.

 \medskip
 Analogously, the {\em Ihara group}  $\Gamma := \SL_2(\Z[1/p])$  
acts  by M\"obius transformations
on  $\cH$ and on   Drinfeld's $p$-adic
 upper half plane $\cH_p:= \PP_1(\C_p) \setminus \PP_1(\Q_p)$. 
Its diagonal action on the product $\cH_p\times \cH$ is discrete, and 
the quotient 
$$\cS:= \Gamma\backslash(\cH_p\times\cH)$$ can be envisaged as a ``mock Hilbert modular surface", following a suggestive terminology of Barry Mazur \cite{mazur-pc}.
Fleshing out the    analogy between $\cS_F$ and 
$\cS$ leads to  fruitful   perspectives on   the arithmetic of elliptic curves (and modular abelian varieties) over $\Q$ with multiplicative reduction at $p$. 
Notably, 
\begin{itemize}
\item [$\bfcdot$]
the ``exceptional zero conjecture" on derivatives of the $p$-adic $L$-functions of these elliptic curves formulated by  Mazur, Tate and Teitelbaum \cite{mtt} and proved by Greenberg and Stevens \cite{gs} 
can be understood   as
the counterpart of  \eqref{eqn:oda} for $\cS$;
\item[$\bfcdot$]
the mock analogues  of the ATR points  of 
\eqref{eqn:logan}
are the Stark--Heegner points of \cite{darmon-hpxh} which are indexed by real quadratic geodesics on $\cS$ and are conjecturally defined over ring class fields of real quadratic fields. 
\end{itemize}
These two analogies  are briefly explained in Sections 
\ref{sec:mttgs} and \ref{sec:stark-heegner} respectively.

\medskip
 A striking insight of Nekov\'a$\check{\text{r}}$ and Scholl 
(\cite{HiddenSym}, \cite{NekRubinfest}, \cite{nekovar-scholl})
suggests 
 that zero-dimensional CM cycles  on $\cS_F$ 
should   give rise to ``plectic Heegner points" involving  non-trivial regulators for elliptic curves (over $F$) of rank two.  At the moment, no precise numerical recipe is available to compute them, placing the conjectures of loc.cit.~somewhat outside
 the scope of experimental verification. (The reader is nevertheless  referred to \cite{PlecticJacobians} for some results in that direction.)
More recently,  the second author and Lennart Gehrmann have explored the implications of 
 the plectic conjectures in the setting of quaternionic Shimura varieties uniformised by products of Drinfeld's upper half planes \cite{fornea-gehrmann}. They constructed unconditionally $p$-adic realizations of plectic Heegner points which admit a concrete analytic description. In that context the plectic philosophy has been tested   experimentally in \cite{fgm},  and some partial evidence has been given in \cite{polyquadraticPlectic}, even though its theoretical underpinnings remain  poorly understood. 
  These non-archimedean perspectives  suggest  that it might be instructive to examine the  
 plectic philosophy in the  intermediate setting of mock Hilbert modular surfaces,
whose periods  involve a somewhat delicate mix  of complex and $p$-adic integration.
 
 \medskip
 The primary aim of this note is to develop  the analogy between $\cS_F$ and $\cS$  and describe its most important arithmetic applications, 
with a  special emphasis on 
 the plectic framework where it had not been  examined systematically before. 
 The  main new contribution, presented in Section  
\ref{sec:plectic}, is the construction of  \emph{global} cohomology classes -- referred to as ``mock plectic invariants" -- associated to  elliptic curves over $\Q$ of conductor $p$ and  CM points on $\cS$. The techniques developed in this paper  can be used to generalize and upgrade the construction of plectic $p$-adic invariants of  \cite{fgm}, \cite{fornea-gehrmann} in the CM setting,   relating these objects to the leading terms  of anticyclotomic $p$-adic $L$-functions that were introduced and studied in \cite{bd-Mumford}. Viewed in this way, the plectic philosophy is seen to be consistent with the  anticyclotomic Birch and Swinnerton-Dyer conjecture  in the somewhat exotic setting of loc.cit., where the twisted $L$-values that one wishes to interpolate {\em vanish identically} and it becomes necessary to
$p$-adically interpolate the  Heegner points themselves over the anticyclotomic tower, viewed as 
algebraic avatars of  {\em first derivatives} of  twisted $L$-series.
The authors hope that this  consistency provides some oblique evidence for the  plectic philosophy  of Nekov\'a$\check{\text{r}}$ and Scholl, while enriching the dictionary between Hilbert modular surfaces and their mock counterparts.

\bigskip\noindent
{\bf Acknowledgements}.
The authors are grateful for the hospitality of the  Mathematical Sciences Research Institute in Berkeley during the Spring of 2023
when this project was initiated. They also thank Lennart Gehrmann and Matteo Tamiozzo for stimulating discussion surrounding the topics of this paper. The first author was supported by an NSERC Discovery grant, a Simons fellowship, and a  Clay Senior Scholarship during that time. The second author was supported by the National Science Foundation under Grant No. DMS-1928930 while in residence at the MSRI.
 
\bigskip
This work owes much to the   broad vision and deep insights
of Jan Nekov\'a$\check{\text{r}}$. The authors dedicate it to him, with admiration and gratitude. 
\bigskip

\section{Mock Hilbert modular forms and their periods}
\label{sec:mttgs}

A {\em mock Hilbert modular form} on $\cS$ (of parallel weight $2$) 
should be thought of,   loosely speaking,   as a ``holomorphic differential two-form on $\Gamma\backslash(\cH_p\times \cH)$", i.e., a function 
$f(z_p, z_\infty)$ of the variables $z_p\in \cH_p$ and $z_\infty\in \cH$ which is rigid analytic in $z_p$, holomorphic in $z_\infty$, and satisfies the transformation rule
\begin{equation}
\label{eqn:fake-object}
``  f\left( \frac{az_p + b}{c z_p+d}, \frac{az_\infty + b}{c z_\infty+d}\right) =  (cz_p + d)^2 (c z_\infty + d)^2 f(z_p, z_\infty)"  \qquad \mbox{ for all } \left(\begin{array}{cc} a & b \\ c &  d \end{array}\right) \in \Gamma.
 \end{equation}
 The  awkward mix of rigid  and complex analysis inherent in this  (non) definition  prevents 
 \eqref{eqn:fake-object}
 from resting on a solid mathematical foundation. A few simple facts about rigid differentials on the Drinfeld upper half plane can nonetheless   be made to conjure a concrete object that captures key features of
  \eqref{eqn:fake-object}.  
 
 \subsection{Digression: rigid analytic differentials on $\cH_p$}
 
 Let $\cT := \cT_0 \sqcup \cT_1$ denote
  the Bruhat--Tits tree   of 
 $\SL_2(\Q_p)$, whose set   $\cT_0$ of vertices is 
  in bijection with homothety classes of 
 $\Z_p$-lattices in $\Q_p^2$, two lattices being joined by an edge in 
 $\cT_1\subset \cT_0^2$  if they are represented by lattices contained one in the other with index $p$. 
There is a  natural 
 {\em reduction map}
 $$ {\mathrm {red}}\colon \cH_p \longrightarrow \cT $$
   from $\cH_p$  to $\cT$, which maps 
   the {\em standard affinoid}
  \begin{equation}
  \label{eqn:Av0}
   \cA_{\circ} := \{ z \in  \cO_{\C_p} \mbox{ such that } |z-a|\ge 1, \mbox{ for all }  a\in \Z_p \} \  \subset \ \PP_1(\C_p)
   \end{equation}
to  the  vertex attached to the lattice
  $v_\circ = [\Z_p^2]$.
  The $p+1$ mod $p$ residue discs in the complement of $\cA_{\circ}$   are in natural bijection with $\PP_1(\F_p)$  and contain the boundary annuli
 \begin{eqnarray}
 \label{eqn:Aej}
 \cW_{\infty} &=& \{ z\in \PP_1(\C_p) \mbox{ such that } 1< |z|<p    \}, 
 \\ \nonumber
   \cW_{j} &=&  \{ z\in \PP_1(\C_p) \mbox{ such that } 1/p< |z-j|<1 \},
    \mbox{ for } j=0,\ldots, p-1.
 \end{eqnarray}
    The edges having $v_\circ$ as an endpoint
are likewise in bijection with $\PP_1(\F_p)$ 
  by setting
\begin{equation}
\label{eqn:distedges}
e_\infty \leftrightarrow (\Z_p^2,  \  \Z_p\cdot (1,0) + p \Z_p^2), \qquad 
 e_j \leftrightarrow (\Z_p^2,  \ \Z_p\cdot (j,1) + p \Z_p^2), \ \  \mbox{ for } j=0,\ldots, p-1.
\end{equation}
  The preimage of the  singleton $\{e_j\}$    under the reduction map is the annulus $\cW_j$, for each $j\in \PP_1(\F_p)$.
The  properties 
$$ {\rm red}^{-1}(\{v_\circ\}) = \cA_{\circ}, 
\qquad {\rm red}^{-1}(\{e_j\}) = \cW_j, \ \ \mbox{ for all } j\in \PP_1(\F_p),$$
combined with the requirement of compatibility with the natural actions of $\SL_2(\Q_p)$ on 
 $\cH_p$ and on $\cT$, determine  the reduction map uniquely.
  In particular, 
the preimage, denoted $\cA_v$, 
  of the vertex  $v\in \cT_0$ is an  affinoid  obtained by  taking the complement in $\PP(\C_p)$ of    $(p+1)$ mod $p$ residue discs with $\Q_p$-rational centers (relative to  a coordinate on $\PP(\Q_p)$ depending on $v$).
 Given an edge $e=(v_1,v_2) \in \cT_1$, the affinoids  $\cA_{v_1}$ and $\cA_{v_2}$ 
 are glued to each other along the 
$p$-adic  annulus attached to $e$,  denoted $\cW_e$. With just a modicum of artistic licence, the entire Drinfeld upper-half plane can  be visualised  as a tubular neighbourhood of $\cT,$  as in the figure  below for $p=2$.

\bigskip
\begin{figure}[ht!]
\centering
\includegraphics[width=90mm]{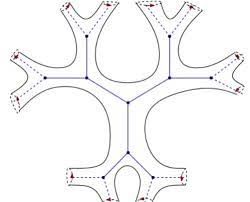}
\caption{The Drinfeld upper half plane and the Bruhat-Tits tree \label{fig:Hp}}
\end{figure}

\bigskip\noindent
This picture  suggests that $\cH_p$, unlike its Archimedean counterpart, is   far from being simply connected and 
that its first cohomology is quite rich. 
For each edge $e\in \cT_1$,
the  de Rham cohomology of  $\cW_e$ is identified with $\C_p$ 
 via the map that sends $\omega \in \Omega^1_{\mathrm {rig}}(\cW_e)$
 to its $p$-adic annular residue, denoted ${\mathrm {res}}_{\cW_e}(\omega)$.  This residue map is well-defined up to a sign, which is determined by fixing an {\em orientation} on $\cW_e$, or, equivalently, viewing $e$ as an {\em ordered} edge of $\cT$, having a source and target. Let $\cE(\cT)$ denote the set of such ordered edges,    let $s,t:\cE(\cT) \rightarrow  \cT_0$ denote the source and target maps,
 and write $\bar e$ for  the edge $e$ with its source and target interchanged.
  
 \begin{definition}
 A {\em harmonic cocycle} on $\cT$ is a $\C_p$-valued function 
 $$ c:  \cE(\cT) \rightarrow \C_p$$
  satisfying the following properties:
 \begin{itemize}
 \item[$\bfcdot$]  $ c(\bar e) =-c(e)$,   for all  $e\in \cE(\cT)$;
  \item[$\bfcdot$] for all vertices $v$ of $\cT$,  
 $$\sum_{s(e)=v}  c(e) =   \sum_{t(e)=v} c(e)=0.$$ 
  \end{itemize}
  \end{definition} 
 The $\C_p$-vector space of $\C_p$-valued harmonic cocycles on $\cT$ is denoted ${\mathrm C}_{har}(\cT, \C_p)$.
The class of a rigid analytic differential $\omega\in \Omega^1_{\mathrm {rig}}(\cH_p)$ in the de Rham cohomology of $\cH_p$ is encoded in the
$\C_p$-valued function $c_\omega$ on $\cE(\cT)$ defined by
 $$ c_\omega(e) = {\mathrm {res}}_{\cW_e}(\omega).$$
 That $c_\omega$ is  a harmonic cocycle follows directly from  the residue theorem for rigid differentials. 
  The oriented edges of $\cT$ are also in natural bijection with the compact open balls in $\PP_1(\Q_p)$, by assigning to 
  $e\in \cE(\cT)$ the ball $U_e$ 
 according to the following 
 prescriptions:
 $$ U_{\bar e} \sqcup U_e = \PP_1(\Q_p), \qquad U_{e_\infty} = \PP_1(\Q_p) - \Z_p, \qquad U_{\gamma e} = \gamma U_e,  
 \mbox{ for all } \gamma \in \Gamma,$$
 where $e_\infty$ is the distinguished edge of 
 $\cE(\cT)$ evoked in \eqref{eqn:distedges}.
 The harmonic cocycle 
 $c_\omega$ can therefore be parlayed into a $\C_p$-valued distribution $\mu_\omega$ satisfying the defining property
 $$ \mu_\omega(U_e) = c_\omega(e),$$
 where $U_e\subset \PP_1(\Q_p)$ is the open ball corresponding to the ordered edge $e$.
 The distribution $\mu_\omega$ lives in the dual space of locally constant $\C_p$-valued functions on $\PP_1(\Q_p)$.
 
   \medskip
 For this paragraph, and this paragraph only, let $\Gamma\subset \SL_2(\Q_p)$ be a group acting {\em discretely} on $\cH_p$ and for which the quotient graph $\Gamma \backslash \cT$ is finite. If the rigid differential $\omega$ is $\Gamma$-invariant, the harmonic cocycle $c_\omega$ takes on finitely many values and is therefore
 {\em $p$-adically bounded}.   The distribution $\mu_\omega$ then extends to a $\C_p$-valued measure, which can be integrated against continuous functions on $\PP_1(\Q_p)$. 
 The differential $\omega$ can then be obtained from 
 $\mu_\omega$ by the rule
\begin{equation}
\label{eqn:teitelbaum}
 \omega = \int_{\PP_1(\Q_p)} \frac{\mathrm{d}z}{z-t}\hspace{1mm} \mathrm{d}\mu_\omega(t),
 \end{equation}
 a special case of Jeremy Teitelbaum's $p$-adic Poisson Kernel formula
 \cite{teitelbaum}
  which  recovers a rigid analytic modular form on $\cH_p$ from its associated boundary distribution.
 
 \subsection{Modular form-valued harmonic cocycles as mock residues}
 
 Returning to the setting where $\Gamma= \SL_2(\Z[1/p])$   and to the   dubious
notion of a mock Hilbert modular form  on $\Gamma\backslash (\cH_p\times\cH)$  proposed in  \eqref{eqn:fake-object}, the discussion in the previous section suggests at least
 what  its system of $p$-adic annular residues ought to look like:
 \begin{definition}
 \label{def:mock-residue}
 A   {\em system of mock residues} is a  harmonic cocycle
 $$ c : {\cE}(\cT) \longrightarrow \Omega^1({\cH})$$
 with values in the space $\Omega^1({\cH})$
 of holomorphic differentials  on $\cH$,
 satisfying 
 \begin{itemize}
 \item[$\bfcdot$]  $c(e)$ is a weight two    cusp form  on   the stabiliser $\Gamma_e$ of $e$ in $\Gamma$, i.e., a holomorphic differential on the standard compactification of $\Gamma_e\backslash \cH$;
 \item [$\bfcdot$]
 more generally, for all $\gamma\in \Gamma$ and all $e \in \cE(\cT)$, 
 $$ \gamma^* c(\gamma e) = c(e).$$
 \end{itemize}
 \end{definition}
 \noindent
 Roughly speaking, a system of  mock residues is what one might expect to obtain from the $p$-adic annular residues of 
 a mock Hilbert modular form of parallel weight two. But unlike 
 \eqref{eqn:fake-object}, Definition 
 \ref{def:mock-residue} is completely rigorous.
Since 
 $\Gamma$ acts transitively on the  unordered  edges of $\cT$, 
 and   because
   the Hecke congruence group
 $\Gamma_0(p)$   is the
    stabiliser in $\Gamma$
    of  the distinguished edge 
  $e_\infty\in \cE(\cT)$ of \eqref{eqn:distedges},  
   the map $c \mapsto c(e_\infty)$ identifies the   complex vector 
   space $\cC_{\rm har}(\cT,\Omega^1(\cH))^{\Gamma}$ of mock residues  for $\Gamma$
 with  the space $S_2(\Gamma_0(p))^{p\mbox{-}\mathrm{new}}$ of weight two newforms of level $p$.
It transpires that 
 mock Hilbert modular forms -- or at least,   their systems of
 mock $p$-adic   residues --
   are  merely a slightly overwrought
      incarnation of classical modular forms of weight two on $\Gamma_0(p)$.

 \subsection{$\C$-valued distributions}
 \label{sec:distributions}
 Given a $p$-new weight two cusp form $f$ on  $\Gamma_0(p)$,  denote by 
 $c_f$  the associated mock residue, and 
 write $f_e:= c_f(e) \in S_2(\Gamma_e)$.
For any $x,y\in \cH^* := \cH \sqcup \PP_1(\Q)$, 
 the  assignment 
\begin{equation}
\label{eqn:C-dist}
 e \mapsto  2\pi i\int_x^y f_e(z)\hspace{0.5mm} \mathrm{d}z 
 \end{equation}
 is a $\C$-valued harmonic  cocycle on $\cT$, denoted $c_f[x,y]$. It  determines a $\C$-valued distribution  $\mu_f[x,y]$
 on $\PP_1(\Q_p)$, which can be integrated against locally constant
 complex-valued functions on $\PP_1(\Q_p)$.  
 In order to 
integrate $\mu_f[x,y]$  against Teitelbaum's $p$-adic Poisson kernel
as in \eqref{eqn:teitelbaum},   the   distribution 
  $\mu_f[x,y]$ needs to  be upgraded to  a   measure with suitable integrality properties.
 
 \subsection{Modular symbols}
 \label{sec:modular-symbols}
 Suppose henceforth that $f$ is a Hecke eigenform with rational fourier coefficients, and let $E_{/\bb{Q}}$ denote the corresponding strong Weil curve.
  The theory of modular symbols shows that the values of the harmonic  cocycle 
  $c_f[x,y]$ acquire good integrality properties  when $x,y$ belong to the boundary $\PP_1(\Q)$ of the extended upper half plane. More precisely, Manin and Drinfeld have shown that the values 
\begin{eqnarray*}
 c_f[r,s](e) &=&  2\pi i\int_r^s f_e(z) \hspace{0.5mm} \mathrm{d}z  \qquad r,s\in \PP_1(\Q)   \\
 &=& 2\pi i\int_{\gamma r}^{\gamma s} f(z) \hspace{0.5mm} \mathrm{d}z, 
 \qquad \mbox{where } \gamma\in \Gamma \mbox{  satisfies }  \gamma e = e_{\infty}   
 \end{eqnarray*}  
 belong to a lattice $\Lambda_f \subset \C$ which is commensurable with the period lattice $\Lambda_E$ of $E$.
 Restricting the function $(x,y) \mapsto c_f[x,y]$
 to $\PP_1(\Q)\times \PP_1(\Q)$ leads to a modular symbol with values in the space
  of $\Lambda_f$-valued harmonic cocycles on $\cT$. For economy of notation, the resulting $\Lambda_f$-valued measures on $\PP_1(\Q_p)$ will continue to be   denoted
 $$ \mu_f[r,s] \ \in  \ {\mathrm {Meas}}(\PP_1(\Q_p), \Lambda_f).$$
  The $\Lambda_f$-valued measures $\mu_f[r,s]$ are intimately connected to special values of the Hasse--Weil $L$-series attached to $E$, via the formulae 
 \begin{equation}
 \mu_f[0,\infty](\Z_p) = L(E,1), \qquad  \mu_f[0,\infty](\Z_p^\times) = (1-a_p(E)) \cdot L(E,1),
 \end{equation}
 where $a_p(E)= 1$  or $-1$ depending on whether $E$ has split or non-split multiplicative reduction at $p$.
 The Mazur--Swinnerton-Dyer $p$-adic $L$-function attached to $E$ (viewed as taking values in $\Q_p\otimes \Lambda_f$)
 is   the Mellin--Mazur transform  of $\mu_f[0,\infty]$ restricted to $\Z_p^\times$: 
\begin{equation}
\label{eqn:msd}
 L_p(E,s) = \int_{\Z_p^\times} \langle x\rangle^{s-1} d\mu_f[0,\infty](x).
 \end{equation}
 More generally, if $\chi$ is a primitive Dirichlet character of conductor $c$ prime to $p$, the twisted $L$-values 
 $L(E,\chi,1)$  can be obtained  analogously from the measures $\mu_f[a/c,\infty]$: 
 \begin{eqnarray*}
 \sum_{a\in (\Z/c\Z)^\times} \overline{\chi}(a) \cdot\mu_f[-a/c,\infty](\Z_p) &=&  \frac{c}{\tau(\chi)}\cdot L(E,\chi,1), \\
\sum_{a\in (\Z/c\Z)^\times} \overline{\chi}(a) \cdot\mu_f[-a/c,\infty](\Z_p^\times) &=& \frac{c}{\tau(\chi)}\cdot\big(1-\overline{\chi}(p) a_p(E)\big) \cdot  L(E,\chi,1),
 \end{eqnarray*}
 where $\tau(\chi)=\sum_{a\in(\Z/c\Z)^\times}\chi(a)e^{2\pi i \frac{a}{c}}$ is the Gauss sum attached to $\chi$,
while the Mazur--Swinnerton-Dyer $p$-adic $L$-function can be defined by setting
\begin{equation}
\label{eqn:msd-bis}
 L_p(E,\chi,s) := \sum_{a\in (\Z/c\Z)^\times} \chi(a) \int_{\Z_p^\times} \langle x\rangle^{s-1} d\mu_f[-a/c,\infty](x).
 \end{equation}
   Even more importantly for the constructions that will follow, 
a system of $\Lambda_f \otimes \C_p$-valued
 rigid differentials
 on the $p$-adic upper half-plane can also be obtained by integrating  the measures
 $\mu_f[r,s]$ against Teitelbaum's Poisson kernel:
\begin{equation}
\label{eqn:omegafrs}
 \omega_f[r,s] \ := \ \int_{\PP_1(\Q_p)} \frac{\mathrm{d}z}{z-t}\hspace{1mm} \mathrm{d}\mu_f[r,s](t)  \ \ \  \in  \ \ \Omega^1_{\mathrm{rig}}(\cH_p) \otimes \Lambda_f.
 \end{equation}
 The assignment $\omega_f: (r,s)\mapsto \omega_f[r,s]$ determines  a $\Gamma$-equivariant modular symbol with values in $\Omega^1_{\mathrm{rig}}(\cH_p) \otimes \Lambda_f$, satisfying
 $$ \gamma^* \omega_f[\gamma r,\gamma s]
 = \omega_f[r,s], \qquad \mbox{ for all } \gamma \in \Gamma.$$

 \subsection{The Mazur--Tate--Teitelbaum conjecture}
 Write $K_p$ for the quadratic unramified extension of $\Q_p$, let $\cA^\times$ denote the multiplicative group of nowhere vanishing rigid analytic functions on $\cH_p$ endowed with the $\Gamma$-action induced by M\"obius transformations, and let $\cA^\times/K_p^\times$ be the quotient by the subgroup of constant $K_p^\times$-valued functions.
 The logarithmic derivative $F \mapsto \mrm{d}F/F$ gives a $\Gamma$-equivariant map from $ \cA^\times/K_p^\times$ to  $\Omega^1_{\rm rig}(\cH_p)$, whose image contains the rigid differentials   $\omega_f[r,s]$: 
 \begin{lemma}
 \label{lemma:invert-dlog}
 The differentials $\omega_f[r,s]$ are in the image of the logarithmic derivative map, i.e., there are elements $F_f[r,s]\in (\cA^\times/K_p^\times) \otimes \Lambda_f$ satisfying
 \begin{equation}
 \label{eqn:dloglemma}
 {\rm dlog}\hspace{0.5mm}F_f[r,s] = \omega_f[r,s].
 \end{equation}
  \end{lemma}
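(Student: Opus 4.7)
The plan is to construct $F_f[r,s]$ as a multiplicative (Shnirelman-type) Poisson integral of the measure $\mu_f[r,s]$, mirroring the additive formula \eqref{eqn:omegafrs}. The essential input is the integrality established in \S\ref{sec:modular-symbols}: by Manin--Drinfeld the measure $\mu_f[r,s]$ takes values in the lattice $\Lambda_f\subset\C$, and the harmonic cocycle condition forces $\mu_f[r,s](\PP_1(\Q_p))=0$. Fixing a $\Z$-basis $\{\lambda_1,\lambda_2\}$ of $\Lambda_f$, I will decompose $\mu_f[r,s]=\mu_1\lambda_1+\mu_2\lambda_2$ into two $\Z$-valued measures of total mass zero.

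For any such $\Z$-valued $\mu$ on $\PP_1(\Q_p)$, the multiplicative integral
\[
F_\mu(z)\;:=\;\mint_{\PP_1(\Q_p)}(z-t)\,d\mu(t)\;\in\;\cA^\times/K_p^\times
\]
will be defined as the limit, over ever finer disjoint coverings $\{U\}$ of $\PP_1(\Q_p)$ by compact open balls with chosen representatives $t_U\in U\cap\Q_p$, of the finite products $\prod_U (z-t_U)^{\mu(U)}$. Each such product is a rational function on $\PP_1$ with all zeros and poles on $\PP_1(\Q_p)$, hence restricts to an element of $\cA^\times$; the integrality of $\mu$ is what makes the exponents $\mu(U)$ well-defined. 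Altering $t_U$ within $U$ or refining the partition multiplies the product by a factor whose image in $\cA^\times/K_p^\times$ tends to $1$, so the limit exists. This is a standard construction from the Schneider--Teitelbaum and Bertolini--Darmon theory of $p$-adic multiplicative integration. Extending $\Z$-bilinearly yields
\[
F_f[r,s]\;:=\;F_{\mu_1}\otimes\lambda_1+F_{\mu_2}\otimes\lambda_2\;\in\;(\cA^\times/K_p^\times)\otimes\Lambda_f.
\]

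Verifying \eqref{eqn:dloglemma} should then be formal. Since $d\log((z-t_U)^n)=n\cdot dz/(z-t_U)$, taking the logarithmic derivative of each finite product yields the Riemann sum $\sum_U \mu(U)\cdot dz/(z-t_U)$ for Teitelbaum's Poisson kernel. Passing to the limit and combining the two components produces
\[
d\log F_f[r,s]\;=\;\int_{\PP_1(\Q_p)}\frac{dz}{z-t}\,d\mu_f[r,s](t)\;=\;\omega_f[r,s].
\]

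The main technical obstacle will be the convergence statement: the partial products must converge modulo $K_p^\times$, uniformly on affinoid subsets of $\cH_p$, and the limit must be independent of all choices (of partition, of representatives $t_U$, and of the treatment of $t=\infty$, which requires a conventional normalisation whose ambiguity is absorbed by the quotient by $K_p^\times$). The integrality provided by Manin--Drinfeld is indispensable here, as non-integer exponents would render the partial products multi-valued and obstruct any sensible passage to the limit.
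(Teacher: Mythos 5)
Your proposal is correct and follows essentially the same route as the paper: the paper also defines $F_{\mu}$ as a limit over refining partitionings of $\PP_1(\Q_p)$ of rational functions with divisor $\sum_j \mu(C_j)\cdot[t_j]$ (which are exactly your finite products $\prod_U (z-t_U)^{\mu(U)}$, normalised at a base point rather than modulo $K_p^\times$), exploiting precisely the Manin--Drinfeld integrality and the total-measure-zero condition, and then extends $\Lambda_f$-linearly. The verification of ${\rm dlog}\, F_f[r,s]=\omega_f[r,s]$ via Riemann sums is likewise the paper's (unstated but intended) argument.
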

  \begin{proof}
 A {\em  partitioning} of   $\PP_1(\Q_p)$ is a collection 
\begin{equation}
\label{eqn:defcC}
\cC = \{ (C_1,t_1), \ldots, (C_m,t_m)\},
\end{equation}
 where the $C_j$  are compact open subsets of
 $\PP_1(\Q_p)$ which are mutually disjoint  and
 satisfy 
 $$\PP_1(\Q_p) =  C_1 \sqcup \cdots \sqcup C_m, \qquad t_j\in C_j \mbox{ for } j=1,\ldots, m.$$
 The set of partitionings of $\PP_1(\Q_p)$ is   equipped with a natural partial ordering  in which 
 $ \cC \le \cC'$ if each of the compact open subsets involved in $\cC'$ is contained in one of the compact open subsets 
 arising  in $\cC$.
 Let $\mu$ be a 
 $\Z$-valued mesure of total measure zero on 
 $\PP_1(\Q_p)$.  Each partitioning of $\PP_1(\Q_p)$ 
  gives rise to a system of degree zero divisors on $\PP_1(\Q_p)$  by associating to 
  the partitioning $\cC$ of \eqref{eqn:defcC}
   the divisor
 $$ \sD_{\cC} := \sum_{j=1}^m \mu(C_j) \cdot [t_j]. $$
Fix a base point $z_0\in \cH_p(K_p)$ and let    
 $F_{\cC}$ be the unique  rational function satisfying
 $$ {\rm Divisor}(F_{\cC}) = \sD_{\cC}, \qquad F_{\cC}(z_0)= 1,$$
 which exists because the divisor $\sD_{\cC}$ is supported on $\PP_1(\Q_p)$.
 The limit
 $$ F_\mu := \lim_{\cC} F_{\cC} \in \cA^\times $$
 taken over any maximal chain in the set of partitionings, is a well-defined element of $\cA^\times $ which depends only on  $\mu$  and on the base point 
 $z_0$, and whose image in $\cA^\times/K_{p}^\times$ does not depend on the choice of   $z_0$.
  Extending  this construction to  $\Lambda_f$-valued measures in the obvious way and applying it to the measures
 $\mu_f[r,s]$, it
 is readily verified that  
 $$F_f[r,s]:= F_{\mu_f[r,s]} \in  (\cA^\times/K_{p}^\times)\otimes \Lambda_f$$
 satisfies \eqref{eqn:dloglemma}.
  \end{proof}
  The assignment 
 $(r,s) \mapsto F_f[r,s]$ 
 defines a $\Gamma$-invariant modular symbol with values in the $\Gamma$-module
 $(\cA^\times/K_p^\times)\otimes \Lambda_f$, i.e.,
 $$ F_f \in {\rm MS}\big((\cA^\times/K_p^\times)\otimes \Lambda_f\big)^\Gamma,$$
 where ${\rm MS}(\Omega)$ denotes the $\Gamma$-module of modular symbols with values in a $\Gamma$-module $\Omega$.
 The obstruction to lifting  $F_f$
  to  ${\rm MS}(\cA^\times\otimes \Lambda_f)^\Gamma$ is intimately tied with the $p$-adic uniformisation of 
the elliptic curve $E$  which  has multiplicative reduction at $p$. Namely,   let $q \in \Q_p^\times$ be the  $p$-adic  Tate   period of $E$.
The following theorem is a consequence of the conjecture of Mazur, Tate and Teitelbaum 
\cite{mtt} and its proof  by  Greenberg and  Stevens \cite{gs}:
 \begin{theorem}
 \label{thm:mttgs}
 There exists a lattice $\Lambda_f'\supset\Lambda_f$ such that the modular symbol    $F_f$       can be lifted to a $\Gamma$-invariant
      modular symbol with values in $(\cA^\times/q^\Z)\otimes\Lambda_f'$.
  \end{theorem}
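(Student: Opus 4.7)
The plan is to translate the existence of the lift into the vanishing of a cohomological obstruction, and to recognize this vanishing as the content of the Mazur--Tate--Teitelbaum exceptional zero conjecture as established by Greenberg and Stevens.

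Applying the modular symbol functor to the short exact sequence of $\Gamma$-modules
$$0 \longrightarrow K_p^\times/q^\Z \longrightarrow \cA^\times/q^\Z \longrightarrow \cA^\times/K_p^\times \longrightarrow 0,$$
tensoring with $\Lambda_f$, and taking $\Gamma$-invariants yields a connecting homomorphism
$$\delta\colon \MS\bigl((\cA^\times/K_p^\times)\otimes \Lambda_f\bigr)^\Gamma \longrightarrow H^1\bigl(\Gamma, \MS\bigl((K_p^\times/q^\Z)\otimes \Lambda_f\bigr)\bigr).$$
The modular symbol $F_f$ of Lemma \ref{lemma:invert-dlog} lifts to a modular symbol with values in $(\cA^\times/q^\Z)\otimes \Lambda_f'$ for some enlargement $\Lambda_f'\supset \Lambda_f$ precisely when $\delta(F_f)$ is a torsion class: indeed, if $\delta(F_f)$ is killed by an integer $N$, then $N F_f$ lifts into $(\cA^\times/q^\Z)\otimes \Lambda_f$, so $F_f$ itself lifts into $(\cA^\times/q^\Z)\otimes \Lambda_f'$ with $\Lambda_f' := \tfrac{1}{N}\Lambda_f$.

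I would first factor $\delta(F_f)$ through the sharper obstruction $\tilde{\delta}(F_f) \in H^1(\Gamma, \MS(K_p^\times \otimes \Lambda_f))$ coming from the sequence $0\to K_p^\times \to \cA^\times \to \cA^\times/K_p^\times \to 0$, and then decompose the coefficient module rationally via the valuation $\ord_p\colon K_p^\times \to \Z$ and the Iwasawa logarithm $\log_p\colon K_p^\times \to K_p$ (extended by $\log_p(p)=0$). This presents $\tilde{\delta}(F_f)$ as a pair of classes $\tilde{\delta}_\ord(F_f)$ and $\tilde{\delta}_{\log}(F_f)$ inside $H^1(\Gamma, \MS(\Lambda_f)\otimes\Q_p)$, and the passage to the quotient by $q^\Z$ shows that $\delta(F_f)$ vanishes rationally if and only if
$$\log_p(q)\cdot \tilde{\delta}_\ord(F_f) \ = \ \ord_p(q)\cdot \tilde{\delta}_{\log}(F_f)$$
holds modulo coboundaries. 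Equivalently, one must establish the identity $\tilde{\delta}_{\log}(F_f) = \mathcal{L}(E)\cdot \tilde{\delta}_\ord(F_f)$ with $\mathcal{L}(E) = \log_p(q)/\ord_p(q)$.

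The main obstacle is the arithmetic identification of these two components. Unwinding the infinite-product construction in the proof of Lemma \ref{lemma:invert-dlog}, one identifies $\tilde{\delta}_\ord(F_f)$ as a nonzero multiple of the classical $\Lambda_f$-valued modular symbol of $f$, essentially encoding the total masses $\mu_f[r,s]\bigl(\PP_1(\Q_p)\bigr)$. The class $\tilde{\delta}_{\log}(F_f)$ is genuinely deeper: it is interpreted as the derivative along the weight direction of a Hida family of overconvergent modular symbols passing through $f$, evaluated at weight $2$. The required equality $\tilde{\delta}_{\log}(F_f) = \mathcal{L}(E)\cdot \tilde{\delta}_\ord(F_f)$ is precisely the algebraic form of the Mazur--Tate--Teitelbaum conjecture proved in \cite{gs}, and it closes the argument after choosing $\Lambda_f'$ large enough to absorb the resulting denominators.
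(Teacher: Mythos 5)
Your cohomological skeleton matches the paper's: both arguments apply the (exact) modular symbol functor to $0\to K_p^\times/q^\Z\to\cA^\times/q^\Z\to\cA^\times/K_p^\times\to 0$, identify the obstruction $\delta(F_f)$, note that enlarging $\Lambda_f$ lets one ignore torsion, and split the obstruction via $\ord_p$ and a branch of the $p$-adic logarithm so that everything reduces to the single identity $\log(c_f)=\frac{\log(q)}{\ord_p(q)}\cdot\ord_p(c_f)$ of classes in $H^1\big(\Gamma,\MS(K_p\otimes\Lambda_f)\big)$. The gap is in how you establish that identity. Greenberg--Stevens prove a \emph{numerical} statement, $L_p'(E,\chi,1)=\frac{\log(q)}{\ord_p(q)}\,L(E,\chi,1)$; they do not prove an equality of cohomology classes, and your claim that the required equality ``is precisely the algebraic form of MTT proved in [GS93]'' conflates the two. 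To bridge from the numerical identity to the cohomological one, the paper needs two further inputs from \cite{darmon-hpxh}: first, Corollary 3.3 and Lemma 3.4 there, which show that $\ord_p(c_f)$ is non-trivial and that $\log(c_f)$ and $\ord_p(c_f)$ lie in a common one-dimensional eigenspace, hence are \emph{proportional}; second, a coboundary-insensitive evaluation obtained by pairing the cocycles against $(\gamma_{r,s},[r,s])$ with $\gamma_{r,s}$ a hyperbolic element fixing $r,s\in\PP_1(\Q)$ --- coboundaries vanish on such triples, the resulting periods $J_f[r,s]$ have $\ord_p$ and $\log$ given by partial $L$-values and derivatives of partial $p$-adic $L$-values, and the non-vanishing result \cite[Lemma 2.17]{darmon-hpxh} guarantees an evaluation at which the constant of proportionality can actually be read off and identified with $\frac{\log(q)}{\ord_p(q)}$ via Mazur--Tate--Teitelbaum. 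Your substitute for all of this --- interpreting $\tilde\delta_{\log}(F_f)$ as the weight-derivative of a Hida family of overconvergent modular symbols --- is a legitimate alternative strategy pursued in later refinements of \cite{darmon-hpxh}, but as written it is an unproved assertion rather than a citation of an available theorem, and without either it or the proportionality-plus-evaluation argument the proof does not close.

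A smaller but genuine error: $\tilde\delta_{\ord}(F_f)$ does not encode the total masses $\mu_f[r,s]\big(\PP_1(\Q_p)\big)$ --- these vanish identically, since $\mu_f[r,s]$ arises from a harmonic cocycle and $\mu_f[r,s](U_e)+\mu_f[r,s](U_{\bar e})=c_f[r,s](e)+c_f[r,s](\bar e)=0$. The quantities that actually appear when one evaluates $\ord_p(c_f)$ at $(\gamma_{0,\infty},[0,\infty])$ are measures of proper compact open subsets determined by the axis of the hyperbolic element in the tree, e.g.\ $\mu_f[0,\infty](\Z_p)=L(E,1)$; this is exactly why the non-vanishing input about (twisted) central $L$-values is needed.
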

 \noindent
\begin{proof}[Sketch of proof] 
The functor $\mrm{MS}(-)$ of modular symbols is exact, and taking $\Gamma$-cohomology gives the exact sequence 
\[\xymatrix{
 \mrm{MS}\big((\cA^\times/q^\Z)\otimes\Lambda_{f}\big)^\Gamma\ar[r]^\eta& \mrm{MS}\big((\cA^\times/K_p^\times)\otimes\Lambda_{f}\big)^\Gamma\ar[r]^-\delta& H^1\big(\Gamma, \mrm{MS}((K_p^\times/q^\Z)\otimes\Lambda_{f})\big)
}\]
where $\ker(\eta)$ is finite  because so is
the abelianization of $\Gamma$ (\cite{Serre}, II, 1.4). The obstruction to lifting $F_f$ to $\mrm{MS}\big((\cA^\times/q^\Z)\otimes\Lambda_{f}\big)^\Gamma$ is 
encoded by the class $c_f=\delta(F_f)$
which can be represented by the $1$-cocycle
\[
\tilde{c}_f(\gamma)=\frac{\gamma\cdot\tilde{F_f}}{\tilde{F_f}}
\]
where $\tilde{F_f}\in\mrm{MS}(\mathcal{A}^\times\otimes\Lambda_f)$ is any lift of $F_f$.
Let $\log_q$  be the branch  the $p$-adic logarithm satisfying $\log_q(q)=0$ which induces a map
 $K_p^\times/q^\Z \to K_p$ with finite kernel. At the cost of replacing the lattice $\Lambda_f$ with $\Lambda_f'=\frac{1}{t}\Lambda_f$ for some $t\in\Z$, the claim of the theorem reduces to the equality
\[
\mrm{log}_q(c_f)=0 
\]
in $H^1\big(\Gamma, \mrm{MS}(K_p\otimes\Lambda_{f}')\big)$, or equivalently to 
\[
\log(c_f)=\frac{\log(q)}{\mrm{ord}_p(q)}\cdot \mrm{ord}_p(c_f).
\]
Now, \cite[Corollary 3.3 $\&$ Lemma 3.4]{darmon-hpxh} imply that  $\mrm{ord}_p(c_f)$ is non-trivial and that 
the two classes $\log(c_f)$ and $\mrm{ord}_p(c_f)$ are proportional. The factor of proportionality is obtained by  producing a suitable  triple $(\gamma,r,s)\in\Gamma\times\bb{P}_1(\bb{Q})^2$ such that any $1$-coboundary $b$ for $\Gamma$ with values in $\mrm{MS}((K_p^\times/q^\Z)\otimes\Lambda_{f}')$ satisfies $b(\gamma)[r,s]=0$ and
\begin{equation}
\label{aim}
\log(c_f)(\gamma)[r,s]=\frac{\log(q)}{\mrm{ord}_p(q)}\cdot \mrm{ord}_p(c_f)(\gamma)[r,s]\qquad\&\qquad \mrm{ord}_p(c_f)(\gamma)[r,s]\not=0.
\end{equation}
Note that the first requirement is satisfied when $\gamma\in \Gamma$ fixes $r$ and $s$. The stabiliser in $\Gamma$ 
 of any pair $(r,s)\in \PP_1(\Q)^2$ is generated (up to torsion) by a 
 hyperbolic matrix $\gamma_{r,s}$ which has powers of $p$ as its eigenvalues 
 and fixes  the differential $\omega_f[r,s]$.
 The   multiplicative 
 period 
 $$ J_f[r,s]: = \tilde{c}_f(\gamma_{r,s})[r,s](z)= \frac{\tilde F_f[r,s](\gamma_{r,s}z)}{\tilde F_f[r,s](z)},
 \quad \mbox{satisfying} \quad  \log( J_f[r,s]) = \int_{z}^{\gamma_{r,s} z} \omega_f[r,s],$$
 does not depend on the base point 
 $z\in \cH_p(K_p)$ and belongs to $\bb{Q}_p^\times\otimes\Lambda_f'$ \cite[Prop.~2.7]{darmon-hpxh}.
   When $(r,s)= (0,\infty)$, the  period is related  
to  the   central critical value $L(E,1)$  and to  the first  derivative of the Mazur--Swinnerton-Dyer $p$-adic $L$-function $L_p(E,s)$
  attached to $E$ in \eqref{eqn:msd}:
 $$ \ord_p(J_f[0,\infty]) =   \delta_p(E) \cdot L(E,1), \qquad
 \log(J_f[0,\infty]) =   \delta_p(E) \cdot  L_p'(E,1),
 $$
 where
 $ \delta_p(E) = 1$ if $a_p(E)=1$ and $\delta_p(E)=0$ if
$a_p(E) =-1$ (cf.~\cite[\S 2.2 $\&$ 2.3]{darmon-hpxh}).  
As the Mazur--Tate--Teitelbaum conjecture asserts that 
  $$ L_p'(E,1) = \frac{\log(q)}{\ord_p(q)}\cdot L(E,1),$$
  we deduce that 
  \[
\log(J_f[0,\infty])=\frac{\log(q)}{\ord_p(q)}\cdot\mrm{ord}_p(J_f[0,\infty]).
  \]
More generally, the valuations and logarithms of the periods $J_f[\infty,a/c]$ 
with $\gcd(a,c)=1$  can be expressed 
  in terms the special values  (resp.~derivatives) of partial $L$-series (resp.~partial $p$-adic $L$-series) whose linear combinations give all the twisted values
$L(E,\chi,1)$ and  $L_p'(E,\chi,1)$  defined in 
\eqref{eqn:msd-bis}, as  $\chi$ ranges over all primitive  Dirichlet characters  of conductor $c$ for which $\chi(p)= a_p(E)$ \cite[\S 2.2 $\&$ 2.3]{darmon-hpxh}. 
The Mazur--Tate--Teitelbaum conjecture for these $L$-series   and the non-vanishing result of \cite[Lemma 2.17]{darmon-hpxh}     then imply that \eqref{aim} can be achieved.
 \end{proof}
 
 Explicitly, Theorem \ref{thm:mttgs} ensures the existence of a lattice $\Lambda_f'\supset \Lambda_f$ such that $F_f$ is a $\Gamma$-invariant 
 modular symbol with values in $(\cA^\times/q^\Z)\otimes \Lambda_f'$ satisfying  
 
  \begin{itemize}
   \vspace{1mm}
 \item[$\bfcdot$] ${\rm dlog}\hspace{0.5mm}F_f[r,s] = \omega_f[r,s]$, for all $r,s\in \PP_1(\Q)$;

 \vspace{1mm}
 \item[$\bfcdot$] $ F[ \gamma r, \gamma s](\gamma z) = F[r,s](z) \pmod{q^{\Z}\otimes \Lambda_f'}, \quad \mbox { for all } \gamma\in \Gamma, \  r,s\in \PP_1(\Q), \mbox{ and } z\in \cH_p.$
  \vspace{1mm}
 \end{itemize}
The statement that the (multiplicative)
periods $J_f[r,s]$ of the ``mock Hilbert modular form" attached to $E$ lie in a lattice commensurable with  
 $q^\Z\otimes\Lambda_E$  resonates with Oda's period conjecture for Hilbert modular surfaces.
 The emergence of the Tate period in what had, up to now, been a rather formal sequence of constructions provides the first inkling that the point of view of mock Hilbert modular surfaces  
 opens genuinely   
 new perspectives on arithmetic questions related to $f$ and  its associated elliptic curve $E$.

\section{Stark--Heegner points}
\label{sec:stark-heegner}
A  {\em real multiplication} (RM)  point on $\cH_p$ is an element   $\tau\in\cH_p$ which also lies in a real quadratic field $K$. 
 Its {\em associated order} is the subring
\begin{equation}
\label{eqn:deforder}
 \cO_\tau := \left\{ \left(\begin{array}{cc} a& b \\ c & d \end{array}\right)\in \mathrm{M}_2\big(\Z[1/p]\big)  \mbox{ satisfying } a\tau + b = c\tau^2 + d\tau   \right\}
 \end{equation}
 of the matrix ring $\mathrm{M}_2\big(\Z[1/p]\big)$.  This
 commutative ring  is identified with
  a $\Z[1/p]$-order in  $K$ by sending a matrix in
  \eqref{eqn:deforder}
  to its automorphy factor $c\tau+d$. 
   Global class field theory associates to any $\Z[1/p]$-order $\cO\subset K$ an abelian extension $H_{\cO}$ (resp.~$H_{\cO}^+$)  of $K$
   whose Galois  group  over $K$
    is identified with the Picard group (resp.~the narrow Picard group) of  projective $\cO$-modules (resp.~of projective $\cO$-modules endowed with an orientation at $\infty$):
    $$ {\mathrm {Gal}}(H_{\cO}/K) = {\mathrm {Pic}}(\cO), 
    \qquad 
    {\mathrm {Gal}}(H_{\cO}^+/K) = {\mathrm {Pic}}^+(\cO).
    $$
The stabiliser in $\Gamma$  of the RM point   $\tau\in\cH_p$ is  identified with the group of norm one elements  in  $\cO_\tau$. Since $p$ is non-split in $K=\Q(\tau)$,   this stabiliser
 is of rank one. The choice of a fundamental unit of $K$, which is fixed once and for all,  determines a generator $\gamma_\tau$     of the stabiliser of $\tau$ modulo torsion. 
 The {\em Stark--Heegner point} attached to $\tau$ is the 
 element
$$ P_\tau := F_f[r, \gamma_\tau r](\tau) \in (\C_p^\times/q^\Z) \otimes \Lambda_f' = E(\C_p)\otimes \Lambda_f'.$$
The definition of $P_\tau$ ostensibly rests on the choice of an auxiliary base point $r\in \PP_1(\Q)$ but is ultimately independent of that choice.
After choosing real and imaginary generators $\Omega_f^+$ and $\Omega_f^-$ of $\Lambda_f'\cap \R$ and $\Lambda_f'\cap i\R$  and writing
$$ P_\tau = P_\tau^+ \cdot \Omega_f^+ + 
P_\tau^-\cdot \Omega_f^-,$$
the invariants $P_\tau^+$ and $P_\tau^- \in E(\C_p)$ 
are conjectured to satisfy the following \cite{darmon-hpxh}:
\begin{conjecture}
\label{conj:shp}
 The points  $P_\tau^+$ and $P_{\tau}^-$
are  defined over   the ring class field 
 $H_{\cO_\tau}$ and  the narrow  ring class field 
 $H_{\cO_\tau}^+$ respectively.
 The point $P_\tau^-$ is in the minus part for the action of complex conjugation
  on $E(H_{\cO_\tau}^+)$.
 \end{conjecture}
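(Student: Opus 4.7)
The plan is strategic: I acknowledge that this is a famously open conjecture going back to \cite{darmon-hpxh}, and I sketch where the essential difficulty lies and how one would attack it in principle. Some structural reductions are immediate. Since $F_f$ is a $\Gamma$-invariant modular symbol with values in $(\cA^\times/q^\Z)\otimes \Lambda_f'$ and $\gamma_\tau$ stabilises both $\tau$ and the divisor $(\gamma_\tau r) - (r)$ modulo $\Gamma$-coboundaries, the point $P_\tau$ depends only on the $\Gamma$-orbit of $\tau$, not on the auxiliary choice of $r$ nor on the generator of $\cO_\tau^{\times}/\mathrm{torsion}$ up to the usual normalisation. The set of $\Gamma$-orbits of RM points with associated order $\cO_\tau$ is in natural bijection with $\Pic^+(\cO_\tau)$, so one obtains from the outset a candidate family of conjugates of the correct cardinality, indexed naturally by $\Gal(H_{\cO_\tau}^+/K)$.

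The main obstacle -- and the reason this conjecture has remained open for over two decades -- is algebraicity of $P_\tau^\pm$. A promising route is $p$-adic deformation: embed $f$ into a Hida family $\htf$ and construct a two-variable Stark--Heegner class $\mathbf{P}_\tau$ that specialises to $P_\tau$ at weight two, and at certain other weights specialises to classes that can be geometrically identified with classical Heegner points on Shimura curves, where the $p$-adic local uniformisation degenerates to genuine algebraic geometry. Algebraicity at those weights follows from the theory of complex multiplication, and one attempts to transfer it along the family. The technical heart of this strategy is an explicit reciprocity formula in the spirit of Bertolini--Darmon--Prasanna, identifying $\log_q(P_\tau)$ with the leading term of an anticyclotomic $p$-adic $L$-function attached to $E/K$ twisted by characters factoring through $\Pic^+(\cO_\tau)$. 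This is consistent with the plectic perspective developed later in the paper and with the exceptional-zero phenomenology of Mazur--Tate--Teitelbaum recalled in Theorem \ref{thm:mttgs}, extended to the anticyclotomic setting.

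Once algebraicity is granted, the Galois-theoretic assertion can be pinned down by matching the two actions of $\Pic^+(\cO_\tau)$: the geometric action on $\Gamma$-orbits of RM points via $\Z[1/p]$-ideal multiplication in $K$, and the Galois action on the conjectured conjugates of $P_\tau^\pm$ inside $E(\bar{\Q})$. The refinement $P_\tau^+ \in E(H_{\cO_\tau})$ versus $P_\tau^- \in E(H_{\cO_\tau}^+)^{-}$ should reflect the eigenspace decomposition of $\Lambda_f'$ under complex conjugation together with the standard distinction between oriented and unoriented class groups: the orientation at $\infty$ separating $\Pic^+$ from $\Pic$ is carried precisely by the imaginary period $\Omega_f^-$, while the wide class field already suffices for the $+$-part. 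The hardest step throughout will be the comparison, inside the Hida family, between the analytic construction of $\omega_f[r,s]$ from modular symbols and a geometric avatar living in $p$-adic \'etale or syntomic cohomology that is amenable to Shimura reciprocity; all existing work in this direction has succeeded only in restricted CM-type situations, and a general proof likely requires a substantial new input going beyond the techniques currently available.
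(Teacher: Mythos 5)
This statement is Conjecture \ref{conj:shp}, and the paper does not prove it: it is stated as an open conjecture imported from \cite{darmon-hpxh}, and the surrounding text explicitly says that the predicted properties of $P_\tau^{\pm}$ ``are poorly understood,'' with only partial theoretical evidence in \cite{RationalitySH}, \cite{Longo-Vigni}, \cite{Longo-Martin-Yan}, and \cite{AsterisqueBDRSV}. You correctly recognize this, so there is no question of your having missed the paper's argument --- there is none to miss. Your structural remarks (independence of $P_\tau$ from the base point $r$, the bijection between $\Gamma$-orbits of RM points with order $\cO_\tau$ and $\Pic^+(\cO_\tau)$, hence a candidate Galois orbit of the right size) are accurate and are indeed the standard reductions from \cite{darmon-hpxh}. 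Your proposed attack via Hida deformation and comparison with genuinely geometric Heegner classes at auxiliary weights is essentially the strategy behind the known partial results (notably \cite{RationalitySH} and \cite{AsterisqueBDRSV}, which succeed for genus characters / restricted situations), so as a research program it is well aligned with the literature the paper cites.

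That said, what you have written is a program, not a proof, and you are candid about this. The genuine gap is the entire algebraicity and rationality argument: no existing technique transfers algebraicity along the Hida family for general ring class characters of real quadratic fields, and the ``explicit reciprocity formula'' you invoke is itself unproven at the required level of generality. One smaller point worth flagging: the heuristic you give for why the minus part $P_\tau^-$ should require the \emph{narrow} class field (the orientation at $\infty$ being ``carried by'' $\Omega_f^-$) is suggestive but not an argument; the precise matching of the geometric $\Pic^+(\cO_\tau)$-action on RM points with the Galois action via the Shimura-type reciprocity law of \cite{darmon-hpxh} is itself part of what remains conjectural. In short: your assessment of the status of the statement is correct, and no deduction from the paper's own results could close it.
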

 The points $P_\tau^\pm$ are expected to 
 behave in most key respects just like classical Heegner  points over ring class fields of imaginary quadratic fields; in particular they should satisfy 
 an analogue of the Gross--Zagier formula.  Stark--Heegner points are the ``mock" counterpart of the ``ATR points" on elliptic curves over real quadratic fields, arising from topological one-cycles on a genuine Hilbert modular surface, that were alluded to in equation
    \eqref{eqn:logan} of the introduction.
The   properties of the points $P_\tau^\pm$ 
predicted in Conjecture \ref{conj:shp}  are poorly
  understood, just as they are for their    ATR counterparts, in spite of the theoretical 
 evidence  obtained in \cite{RationalitySH}, \cite{Longo-Vigni}, \cite{Longo-Martin-Yan},  and \cite{AsterisqueBDRSV} for instance.

\begin{remark}
 The construction of Stark--Heegner points has been generalised to various
 different settings 
   over the years, notably in 
   \cite{Trif}, \cite{Greenberg}, \cite{ArbitraryDarmon}, \cite{AutomorphicDarmon},  and \cite{fornea-gehrmann}.
\end{remark}

Conjecture \ref{conj:shp} is consistent with the Birch and Swinnerton-Dyer conjecture, since the sign in the functional equation for $L(E/K,s)$ is always $-1$ for $E$ an elliptic curve (or modular abelian variety) of conductor $p$ and $K$ a real quadratic field in which $p$ is inert. The same is true as well for the $L$-functions $L(E/K,\chi,s)$ twisted by ring class characters $\chi$ of prime-to-$p$ conductor. It follows that
$$ \ord_{s=1} L(E/K,\chi,s) \ge 1, \qquad \mbox{ for all } \chi:{\rm Gal}(H_{\cO_\tau}/K) \lra \C^\times, $$
and hence that 
$$ \ord_{s=1} L(E/H_\tau,s) \ge [H_{\cO_\tau}:K].$$
The Stark--Heegner point construction gives a conjectural analytic recipe for   the systematic supply of
non-trivial global points over ring class fields of $K$  whose existence is predicted by the Birch and Swinnerton-Dyer conjecture.

\section{Mock plectic invariants}
\label{sec:plectic}
A remarkable insight of Nekov\'a$\check{\text{r}}$ and Scholl suggests 
 that zero-dimensional CM cycles  on 
 Hilbert modular surfaces
should   encode determinants of global points for elliptic curves of rank two called ``plectic Heegner points". 
This suggests that the CM points on $\cH_p\times \cH$ are just as interesting arithmetically as the RM points on $\cH_p$ that lead to Stark--Heegner points.
The goal of this last chapter is to 
describe the ``mock plectic invariants" attached to CM zero-cycles on  the mock Hilbert surface $\cS$  and to explore the  relevance of these invariants  for the arithmetic of elliptic curves of rank two.
   
Let   $K$ be   a quadratic imaginary field, 
viewed simultaneously as a subfield of $\C_p$ and $\C$, and embedded diagonally in $\C_p\times \C$. 
A   point $\tau = (\tau_p,\tau_\infty) \in (\cH_p\times \cH)\cap K $
  is called a {\em CM point}
on $\cS$ attached to $K$.
For simplicity, it shall be assumed henceforth that its  associated order, defined as in 
\eqref{eqn:deforder}, 
 is the maximal $\Z[1/p]$-order in the imaginary quadratic field $K$,  that this order has class number one,  and that the prime $p$ is inert in $K$, leaving aside the slightly more delicate case where $p$ is ramified.
 
In contrast with the setting for Conjecture \ref{conj:shp} and the discussion following it,
  the sign in the functional equation for $L(E/K,\chi,s)$ is now systematically equal to $1$, for  any ring class character $\chi$  of $K$  of prime-to-$p$ conductor. The
  Birch and Swinnerton-Dyer conjecture therefore predicts that $E(K)$ has {\em even rank}.
A systematic supply of Heegner points 
  over $K$ or over ring class fields of conductor prime to $p$
 is therefore not expected  to arise in this setting.
Rather, the mock plectic invariant attached to $\tau$ 
will  be used to prove the implication
$$  L(E/K,1)\ne 0  \ \ \Rightarrow \ \ E'(\bb{Q}) \mbox{ is finite,} $$
where $E'_{/\bb{Q}}$ is the quadratic twist of $E$ attached to $K$, and it will be
conjectured to remain  non-trivial in settings where ${\rm ord}_{s=1} L(E/K,s) = 2$.

  \subsection{$E(\C)$-valued harmonic cocycles}
  To parlay the system $c_f$ of 
mock residues  attached to $f$ 
into  a rigorous evaluation of the 
 plectic invariant attached to $\tau$, it is natural to replace the $\Gamma$-stable subset $\PP_1(\Q)\subset \cH^*$ of Section 
 \ref{sec:modular-symbols}
  by the $\Gamma$-orbit  
  $$\Sigma := \Gamma \tau_\infty$$
   of $\tau_\infty$ in $\cH$. 
   For  each pair $(x,y)\in \Sigma^2$,  one obtains
 a $\C$-valued harmonic cocycle on $\cT$ via
 \eqref{eqn:C-dist}, denoted $c_f[x,y]$.
 The collection of $c_f[x,y]$ 
 as $x,y$ vary over $\Sigma$  satisfies the $\Gamma$-equivariance property
 $$ c_f[\gamma x,\gamma y](\gamma e) = c_f[x,y](e), \qquad \mbox{ for all } \gamma \in \Gamma, \ x,y\in \Sigma, \mbox{ and } e \in \cE(\cT).$$
 Recall that $\Lambda_f$ is a lattice in $\C$ containing all the periods of the form $2\pi i\int_r^s f(z) dz$ with $r,s \in \PP_1(\Q)$, and that $E$ has been replaced by the isogenous curve with 
 period lattice $\Lambda_f$, which is possible by the Manin-Drinfeld theorem.
  The theory of modular symbols can be invoked  to 
   obtain a 
  $\Gamma$-equivariant collection $\{c_f[x]\}_{x\in \Sigma}$ of harmonic cocycles, indexed by a single $x\in \Sigma$, but with values in $ \C/\Lambda_f = E(\C)$,  satisfying
  $$ c_f[x,y] = c_f[y] - c_f[x] 
  \pmod{\Lambda_f}, 
  \qquad \mbox{ for all } x,y \in \Sigma.$$
  This is done by
setting
\begin{equation}
\label{eqn:transgress-infty}
\begin{split}c_f[x](e) &=  \lvert E(\Q)_\mrm{tors}\rvert\cdot2\pi i\int_{i\infty}^x f_e(z) \hspace{0.5mm}\mathrm{d}z \pmod{\Lambda_f}  \\
&=  \lvert E(\Q)_\mrm{tors}\rvert\cdot2\pi i \int_{i \infty} ^{\gamma x} f(z)\hspace{0.5mm}\mathrm{d}z  \pmod{\Lambda_f}
\end{split}\qquad\mbox{where } \gamma e = e_{\infty} \mbox{  for  } \gamma\in \Gamma.\end{equation}
 As in Section \ref{sec:distributions},
the harmonic cocycle
$c_f[x]$ gives rise to an
 $E(\C)$-valued distribution on 
$\PP_1(\Q_p)$, denoted  $\mu_f[x]$, which can only be integrated against locally constant 
$\Z$-valued functions on $\PP_1(\Q_p)$. 

\subsection{The isogeny tree of a CM curve}\label{IsogenyTree}
The eventual upgrading of $\mu_f[x]$  to a measure is based on the observation that 
the values in 
\eqref{eqn:transgress-infty}
can be interpreted as 
 Heegner points on $E_{/\bb{Q}}$ attached to CM points 
 of $p$-power  conductor on the modular curve $X_0(p)$.
 These points are defined over the anticyclotomic extension
$$ K_\infty = \bigcup_{n=0}^\infty K_n,$$
where $K_n$ is the ring class field of $K$ of conductor $p^n$.
This field is totally ramified at the (unique) prime of $K$ above $p$.
Write
$$ G_n = {\rm Gal}(K_n/K), \qquad G_\infty = {\rm Gal}(K_\infty/K) = \lim_{\leftarrow} G_n.$$
Global class field theory identifies $G_\infty$ with $K_{p,1}^\times$ the group of norm one elements, 
\[
\mrm{rec}\colon K_{p,1}^\times\overset{\sim}{\longrightarrow}G_\infty.
\]
 Let  $A$ be the elliptic curve over $\overline{\Q}$  with complex multiplication by 
the maximal order $\cO_K$. It  is unique up to  isomorphism over $\overline{\Q}$  and has a model
over $\Q$  because of the running class number one assumption. 
Let $\cT_A$ be the $p$-isogeny graph of $A$, whose vertices are elliptic curves over $\overline{\bb{Q}}$ 
related to $A$ by a cyclic isogeny of $p$-power degree, 
and whose edges correspond to $p$-isogenies.
This graph is a tree of valency $(p+1)$ with a 
 distinguished vertex $v_A$ attached to $A$. Since the elliptic curves  that are $p$-power isogenous to $A$ are all defined over $K_\infty$, 
 the  Galois group $G_\infty = K_{p,1}^\times$ acts on $\cT_A$ in the natural way. This action fixes $v_A$ and transitively permutes all the vertices (or edges) that lie at a fixed distance from $v_A$. More precisely, for every $n\ge0$ the subgroup $\scr{U}_n\le K_{p,1}^\times$ attached  to the ring class field $K_n$ under the Galois correspondence
 stabilizes all the vertices of $\cT_A$ at distance $n$ from $v_A$. Choose a sequence of adjacent vertices $\{v_n\}_{n\ge0}$ satisfying
 \[
 \scr{U}_n=\mrm{Stab}_{K_{p,1}^\times}(v_n).
 \]
 For every $n\ge1$ the oriented edge $e_n=(v_{n-1},v_{n})$ from $v_{n-1}$ to $v_{n}$ satisfies
 \[
  \scr{U}_n=\mrm{Stab}_{K_{p,1}^\times}(e_n).
 \]
 The   vertices (resp. edges) of $\cT_A$ at distance $n$ from $v_A$ are in bijection with the $G_n=K^\times_{p,1}/\scr{U}_n$-orbit of $v_n$ (resp. $e_n$).
 It is convenient to interpret each  vertex of $\cT_A$ as a point on the $j$-line $X_0(1)$, and to view
 each edge as a point on the modular curve
 $X_0(p)$, the coarse moduli space of
 pairs of elliptic curves related by a $p$-isogeny.  For $n\ge1$ let $P_n\in X_0(p)(K_{n})$ be the point corresponding to the oriented edge $e_n$.
 Since $\scr{U}_n/\scr{U}_{n+1}$ acts simply transitively on the set of edges at distance $n+1$ from $v_A$ having $v_n$ as an endpoint, 
   it follows  that
 \[
 \mrm{Tr}_{K_{n+1}/K_n}(P_{n+1})= U_p(P_n) \qquad\forall\ n\ge1.
 \]
 
 \begin{remark} \label{Trace-compatible}
 
   Recall we previously defined $a_p(E)= 1$  or $-1$ depending on whether $E$ has split or non-split multiplicative reduction at $p$. If we write $y_n\in E(K_n)$ for the Heegner point
 arising from the divisor $a_p(E)^n\cdot (P_n-\infty)$ on  $X_0(p)$  through the modular parametrization $\varphi_E\colon X_0(p)\to E$ (normalized by $\varphi_E(\infty)=0_E$), then the collection $\{y_n\in E(K_n)\}_{n\ge1}$ is trace-compatible. 
 \end{remark}

 Fix a trivialisation $H_1(A(\C),\Z_p) \simeq \Z_p^2$. 
 The choice of a complex embedding 
 $\iota_\infty: K_\infty \hookrightarrow \C$ together with Shimura's reciprocity law
 determine a $K_{p,1}^\times$-equivariant graph isomorphism 
 $$ j_A: \cT_A\overset{\sim}{\lra} \cT$$
 which sends the vertex attached 
 to $A'$ to the lattice $ H_1(A'(\C),\Z_p) \subseteq H_1(A(\C),\Q_p) = \Q_p^2$, after viewing $A'$ as a curve over
 $\C$ via $\iota_\infty$. In particular, $j_A$ maps the distinguished vertex $v_A$ to $v_\circ$. 
 The
 identification $j_A$  allows the harmonic 
 cocycle $c_f[\tau_\infty]$    to be
 viewed as taking values in 
 ${E(K_\infty)}$. 
 More precisely, \cite[Lemma 1.5]{darmon-hpxh} and equation \eqref{eqn:transgress-infty}  give
 \begin{equation}\label{keyidentity}
 c_f[\tau_\infty](\alpha\cdot e_n)= y_n^{\mathrm{rec}(\alpha)},\qquad \forall\ n\ge1,\ \alpha\in K_{p,1}^\times.
 \end{equation}
The general case of $\tau=\gamma^{-1}\tau_\infty\in\Sigma$ is dealt with by the formula
    \[
    c_f[\tau](e)=c_f[\tau_\infty](\gamma e)\qquad\forall\ e\in\mathcal{E}(\cT).
    \]

\subsection{Measures and the Poisson transform}
In order to integrate continuous function with respect to the measure $\mu_f[\tau_\infty]$, the value group $E(K_\infty)$ needs to be
 $p$-adic  completed. We will denote the $p$-adic completion of the  (infinitely generated)  Mordell-Weil group $E(K_\infty)$ by
$$\widehat{E(K_\infty)} := \varprojlim_n\hspace{1mm} E(K_\infty) \otimes \Z/p^n\Z. $$ 
Viewing $\mu_f[x]$ (for $x\in \Sigma$) 
as an $\widehat{E(K_\infty)}$-valued measure on $\PP_1(\Q_p)$,
 the Teitelbaum transform  of $\mu_f[x]$ gives a collection of elements 
\begin{equation}
\label{eqn:hp-ppt}
 \omega_f[x] := \int_{\PP_1(\Q_p)} \frac{\mathrm{d}z}{z-t}\hspace{1mm} \mathrm{d}\mu_f[x](t) \ \ \in \ \ 
\Omega^1_{\mathrm{ rig}}(\cH_p)  {\widehat \otimes} {\widehat{ E(K_\infty)}}.
\end{equation}
For any $x\in \Sigma$, let 
$$ \iota_x: K \longrightarrow M_2(\Q)$$
be the algebra embedding that sends $K$ to the fraction field of the order $\cO_x$.
The group $\Gamma$ acts on $\Omega^1_{\mathrm{ rig}}(\cH_p)  {\widehat \otimes} {\widehat{ E(K_\infty)}}$ by translation on $\cH_p$, and the Galois group  $ G_\infty$ acts via its natural action on ${\widehat{ E(K_\infty)}}$.
\begin{proposition}
\label{prop:properties-omegatau}
The ${\widehat{ E(K_\infty)}}$-valued rigid differentials $\omega_f[x]$ satisfy the following properties:
\begin{itemize}
\item [$\bfcdot$] For all $\gamma\in \Gamma$, $x\in \Sigma$
$$ \gamma^* \omega_f[\gamma x] = \omega_f[x].$$
\item[$\bfcdot$] For all $\alpha\in K_{p,1}^\times$, 
$$ \iota_x(\alpha)^\ast (\omega_f[x]) = 
{\mathrm {rec}}(\alpha) \omega_f[x].$$
\end{itemize}
\end{proposition}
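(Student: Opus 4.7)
The plan is to deduce both transformation properties from corresponding equivariance statements for the underlying $\widehat{E(K_\infty)}$-valued measure $\mu_f[x]$, transferred through the Teitelbaum--Poisson transform. The first step is the following general identity: for any $\gamma = \smallmat{a}{b}{c}{d}\in\SL_2(\Q_p)$ and any measure $\mu$ on $\PP_1(\Q_p)$,
\[
\gamma^{\ast}\!\left(\int_{\PP_1(\Q_p)}\tfrac{dz}{z-t}\,d\mu(t)\right) \;=\; \int_{\PP_1(\Q_p)}\tfrac{dz}{z-s}\,d\bigl((\gamma^{-1})_{\ast}\mu\bigr)(s) \;-\; \tfrac{c\,dz}{cz+d}\cdot\mu(\PP_1(\Q_p)),
\]
which follows from $\gamma z - \gamma t = (z-t)/((cz+d)(ct+d))$, $d(\gamma z) = (cz+d)^{-2}dz$, and the expansion $\frac{cs+d}{cz+d} = 1 - \frac{c(z-s)}{cz+d}$ after the change of variable $t = \gamma s$. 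Harmonicity of $c_f[x]$ together with $c_f[x](\bar e) = -c_f[x](e)$ forces $\mu_f[x](\PP_1(\Q_p)) = 0$, so the correction term vanishes for our measures, while at the level of edges $\bigl((\gamma^{-1})_{\ast}\mu_f[x]\bigr)(U_e) = c_f[x](\gamma e)$. Both transformation properties therefore reduce to corresponding equivariance statements for the harmonic cocycles $c_f[x]$.

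For property 1, the claim reduces to $c_f[\gamma x](\gamma e) = c_f[x](e) \pmod{\Lambda_f}$ for all $\gamma\in\Gamma$ and $e\in\cE(\cT)$. Choosing $\delta\in\Gamma$ with $\delta e = e_\infty$, the element $\delta\gamma^{-1}$ sends $\gamma e$ to $e_\infty$, and applying the defining formula \eqref{eqn:transgress-infty} to both sides yields the same integral $\lvert E(\Q)_{\mrm{tors}}\rvert\cdot 2\pi i\int_{i\infty}^{\delta x}f(z)\,dz$ modulo $\Lambda_f$.

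For property 2, I first treat the base case $x = \tau_\infty$. The key identity \eqref{keyidentity} gives $c_f[\tau_\infty](\alpha\cdot e_n) = y_n^{\mrm{rec}(\alpha)}$ for every $n\ge 1$ and every $\alpha\in K_{p,1}^\times$. Because $p$ is inert in $K$, the local embedding $\iota_{\tau_\infty}$ carries $K_{p,1}^\times$ into the stabilizer of $v_\circ = v_A$ in $\SL_2(\Q_p)$, and the $K_{p,1}^\times$-orbits of the edges $e_n$ (together with their reversals) exhaust all of $\cE(\cT)$; combined with the antisymmetry $c_f[\tau_\infty](\bar e) = -c_f[\tau_\infty](e)$, this propagates the identity to
\[
c_f[\tau_\infty]\bigl(\iota_{\tau_\infty}(\alpha)\cdot e\bigr) \;=\; \mrm{rec}(\alpha)\cdot c_f[\tau_\infty](e) \qquad\forall\, e\in\cE(\cT).
\]
For general $x = \gamma^{-1}\tau_\infty\in\Sigma$, the conjugation identity $\iota_x(\alpha) = \gamma^{-1}\iota_{\tau_\infty}(\alpha)\gamma$, together with the transport rule $c_f[x](e) = c_f[\tau_\infty](\gamma e)$ stated just before the proposition, yields $c_f[x](\iota_x(\alpha)\cdot e) = \mrm{rec}(\alpha)\cdot c_f[x](e)$; applying the transform identity of the first paragraph with $\gamma = \iota_x(\alpha)\in\SL_2(\Q_p)$ concludes the proof.

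The only non-formal ingredient is the upgrade, upstream of the proposition, of the $E(K_\infty)$-valued distribution $\mu_f[\tau_\infty]$ to a genuine $\widehat{E(K_\infty)}$-valued measure that may be paired against the continuous Teitelbaum kernel $(z-t)^{-1}$; this rests crucially on the trace-compatibility of the Heegner points $y_n$ recorded in Remark \ref{Trace-compatible}, which provides the $p$-adic boundedness needed for the measure extension. Once this is granted, the two equivariance properties follow cleanly by transferring the cocycle-level identities through the Poisson transform as above.
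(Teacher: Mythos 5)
Your proof is correct and fills in exactly the argument the paper leaves implicit: the paper offers no proof of this proposition beyond remarking that the second property ``is just a reformulation of the Shimura reciprocity law,'' which is precisely what your propagation of the key identity \eqref{keyidentity} from the edges $\alpha\cdot e_n$ to all of $\cE(\cT)$ makes rigorous, while your reduction of both properties to cocycle-level equivariance via the total-measure-zero form of the Poisson-transform identity is the intended mechanism. One small quibble with your closing remark: the upgrade of $\mu_f[x]$ from distribution to measure comes from the $p$-adic completeness of $\widehat{E(K_\infty)}$ (any distribution valued in a complete $\Z_p$-module integrates continuous functions), not from the trace-compatibility of the $y_n$, which instead underlies the harmonicity of $c_f[x]$ at the vertices; this is upstream of the proposition and does not affect your argument.
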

The second part of this proposition is particularly noteworthy: setting $x= \tau_\infty$,
it relates the action of the $p$-adic 
torus $\iota_\tau(K_p^\times)$ on $\cH_p$, which fixes $\tau_p$, to the  Galois action on the elements
 $\omega_f[\tau] \in \Omega^1_{\rm rig} {\hat\otimes} \widehat{E(K_\infty)}$,   and is just a reformulation of the Shimura reciprocity law.

 \subsection{The mock plectic invariant}
 Following the same ideas as in the proof of
 Lemma \ref{lemma:invert-dlog}, 
a well-defined system  of multiplicative primitives 
 $$ F_f[x] \in (\cA^\times/K_p^\times) {\hat \otimes}\widehat{E(K_\infty)}$$
 can be attached to the elements $\omega_f[x]$,
 satisfying 
 $$ {\rm dlog}(F_f[x]) = \omega_f[x] \qquad \mbox{for all } x\in \Sigma.$$
The torus $\iota_\tau(K_p^\times)$ has {\em two} fixed points $\tau_p$, $\overline{\tau}_p$ acting on $\cH_p$, they  are interchanged by the action of ${\mathrm {Gal}}(K_p/\Q_p)$.  
This   circumstance leads to the definition of the multiplicative
Nekov\'a$\check{\text{r}}$--Scholl
 {\em mock plectic invariant} attached to the CM point $\tau$, 
 by setting
  $$\NS^\times(\tau) := \frac{F_f[\tau_\infty](\tau_p)}{F_f[\tau_\infty](\overline{\tau}_p)} \ \  \in \ \  \widehat{E(K_\infty)} {\widehat \otimes} K_{p,1}^\times. $$
  Since $p$ is inert in $K$, the group $K_{p,1}^\times$ 
  consists of $p$-adic units and the invariant $\NS^\times(\tau)$ is almost completely determined by its $p$-adic logarithm
  $$ 
 \NS(\tau) := \log \NS^\times(\tau)=  \int_{\overline{\tau}_p}^{\tau_p} \!\omega_f[\tau_\infty] \ \ \in \ \  \widehat{E(K_\infty)} {\widehat \otimes} K_{p}.$$
\begin{lemma}
\label{lemma:ns-inv-inv}
The mock plectic invariant $\NS(\tau)$ belongs to 
$\big(\widehat{E(K_\infty)} {\widehat \otimes} K_{p}\big)^{G_\infty}$.
\end{lemma}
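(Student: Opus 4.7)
The plan is to show that every element of $G_\infty$ fixes $\NS(\tau)$ by translating the Galois action into a geometric pullback on $\cH_p$ by an element of the torus $\iota_{\tau_\infty}(K_p^\times)$, and then using the fact that this torus fixes the two endpoints $\tau_p,\overline{\tau}_p$ of integration.

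Fix $\alpha\in K_{p,1}^\times$ and set $\sigma=\mathrm{rec}(\alpha)\in G_\infty$. Since the $G_\infty$-action on $\omega_f[\tau_\infty]\in \Omega^1_{\rm rig}(\cH_p)\widehat\otimes\widehat{E(K_\infty)}$ affects only the second tensor factor and therefore commutes with integration in the $\cH_p$-variable, one may bring $\sigma$ inside the integral and then apply the second bullet of Proposition \ref{prop:properties-omegatau} (with $x=\tau_\infty$) to rewrite
\[
\sigma\cdot\NS(\tau) \;=\; \int_{\overline{\tau}_p}^{\tau_p} \sigma\cdot\omega_f[\tau_\infty] \;=\; \int_{\overline{\tau}_p}^{\tau_p} \iota_{\tau_\infty}(\alpha)^{\ast}\omega_f[\tau_\infty].
\]

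To make sense of the change of variables, I would evaluate the integral through the multiplicative primitive $F_f[\tau_\infty]\in(\cA^\times/K_p^\times)\widehat{\otimes}\widehat{E(K_\infty)}$ obtained in the CM analogue of Lemma \ref{lemma:invert-dlog}, so that $\int_a^b\omega_f[\tau_\infty] = \log\bigl(F_f[\tau_\infty](b)/F_f[\tau_\infty](a)\bigr)$ for $a,b\in\cH_p(K_p)$; the $K_p^\times$-ambiguity in $F_f[\tau_\infty]$ cancels in the ratio because a constant function takes the same value at $\tau_p$ and $\overline{\tau}_p$. The tautological identity $(\iota_{\tau_\infty}(\alpha)^{\ast}F_f[\tau_\infty])(z) = F_f[\tau_\infty](\iota_{\tau_\infty}(\alpha) z)$ then yields
\[
\int_{\overline{\tau}_p}^{\tau_p} \iota_{\tau_\infty}(\alpha)^{\ast}\omega_f[\tau_\infty] \;=\; \int_{\iota_{\tau_\infty}(\alpha)\overline{\tau}_p}^{\iota_{\tau_\infty}(\alpha)\tau_p} \omega_f[\tau_\infty].
\]

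The crucial geometric input is that the non-split torus $\iota_{\tau_\infty}(K_p^\times)\subset\GL_2(\Q_p)$ acts on $\PP^1(\C_p)$ with exactly two fixed points, namely its two eigenlines, and these are precisely the conjugate roots $\tau_p,\overline{\tau}_p\in\C_p$ of the minimal polynomial of $\tau$ over $\Q$. Hence $\iota_{\tau_\infty}(\alpha)$ fixes both limits of integration and the right-hand integral equals $\NS(\tau)$, giving $\sigma\cdot\NS(\tau)=\NS(\tau)$. Since Shimura reciprocity realises $\mathrm{rec}\colon K_{p,1}^\times \xrightarrow{\sim} G_\infty$ as a (topological) isomorphism, this suffices to conclude. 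The only step requiring a modicum of care is the justification of the change-of-variables formula for the $K_p^\times$-ambiguous primitive $F_f[\tau_\infty]$, but this is elementary once one observes that the ambiguity is $\iota_{\tau_\infty}(\alpha)$-stable and evaluates identically at both fixed points.
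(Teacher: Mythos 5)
Your argument is correct and is essentially the paper's own proof: apply the second assertion of Proposition \ref{prop:properties-omegatau} to convert the Galois action of $\mathrm{rec}(\alpha)$ into the pullback by $\iota_{\tau_\infty}(\alpha)$, then change variables and use that the torus fixes both endpoints $\tau_p$ and $\overline{\tau}_p$. The extra care you take in justifying the change of variables via the multiplicative primitive $F_f[\tau_\infty]$ and the cancellation of the $K_p^\times$-ambiguity is a welcome elaboration of a step the paper leaves implicit, but it does not change the route.
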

\begin{proof}
For all $\alpha\in K_{p,1}^\times$, we have
$$ {\mathrm {rec}}(\alpha) \NS(\tau) = \int_{\overline{\tau}_p}^{\tau_p}  {\mathrm {rec}}(\alpha) \omega_f[\tau] = \int_{\overline{\tau}_p}^{\tau_p}  \iota_\tau(\alpha)^\ast \omega_f[\tau] = \NS(\tau),$$
where the penultimate equality follows from 
the second assertion in Proposition
\ref{prop:properties-omegatau}, and the last from the change of variables formula and the fact that 
$\iota_\tau(K_p^\times)$ fixes both $\tau_p$ and $\overline{\tau}_p$.
\end{proof}

\subsection{Anticyclotomic $p$-adic $L$-functions}
We will now give a formula for the mock plectic invariant
$\NS(\tau)$ in terms of the first derivatives of certain ``anticyclotomic $p$-adic $L$-functions'' in the sense of \cite[Section 2.7]{bd-Mumford}.

\smallskip
Recall that the CM point $\tau=(\tau_p,\tau_\infty)$ determines an embedding $K_p\subseteq\mrm{M}_2(\bb{Q}_p)$ and hence an action of $K_p^\times$ on $\bb{P}_1(\bb{Q}_p)$. Let 
\begin{equation}
\label{homeo}
 A\colon \PP_1(\Q_p) \lra K_{p,1}^\times, 
\qquad A(x) = \frac{x-\tau_p}{x-{\overline \tau}_p}
\end{equation}
be the  M\"obius transformation that sends $(\tau_p,\bar\tau_p,\infty)$ to $(0,\infty,1)$,
and
let $\mu_{f,K}$ be the pushforward of the measure $\mu_f[\tau_\infty]$ to $K_{p,1}^\times$ via $A$:
$$ \mu_{f,K} := A_\ast \mu_f[\tau_\infty].$$
The {\em anticyclotomic $p$-adic $L$-function}
attached to $(E,K)$ is the Mazur-Mellin transform of the measure $\mu_{f,K}$:
\begin{equation}
\label{eqn:def-LpEK}
 L_p(E,K,s) := \int_{K_{p,1}^\times} \langle \alpha\rangle^{s-1} d\mu_{f,K}(\alpha).
 \end{equation}
It can be viewed as  a $p$-adic analytic function from $1+p\Z_p$ to  ${\widehat{E(K_\infty)}} \widehat\otimes K_p$. 
\begin{theorem}
\label{prop:BD-old}
The $p$-adic $L$-function $L_p(E,K,s)$ vanishes at $s=1$ and 
$$ \NS(\tau) = L_p'(E,K,1).$$
\end{theorem}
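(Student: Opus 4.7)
The plan is to reduce both assertions to direct manipulations of the underlying measure $\mu_f[\tau_\infty]$ on $\PP_1(\Q_p)$. The two key observations are that the change of variables $A$ of \eqref{homeo} intertwines the measures $\mu_{f,K}$ and $\mu_f[\tau_\infty]$ by construction, and that the kernel $\log A(t)$ produced by differentiating the Mazur--Mellin integrand is precisely the antiderivative of Teitelbaum's Poisson kernel along the path from $\overline{\tau}_p$ to $\tau_p$. Once this is recognized, Fubini's theorem identifies $L_p'(E,K,1)$ with the integral defining $\NS(\tau)$.

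For the vanishing at $s=1$, I would specialize \eqref{eqn:def-LpEK} to obtain
\[
L_p(E,K,1) \;=\; \mu_{f,K}(K_{p,1}^\times) \;=\; \mu_f[\tau_\infty](\PP_1(\Q_p)).
\]
Partitioning $\PP_1(\Q_p) = U_e \sqcup U_{\bar e}$ for any $e \in \cE(\cT)$ and invoking the defining relation $\mu_f[\tau_\infty](U_e) = c_f[\tau_\infty](e)$ together with the antisymmetry $c_f[\tau_\infty](\bar e) = -c_f[\tau_\infty](e)$ of harmonic cocycles forces $\mu_f[\tau_\infty](\PP_1(\Q_p)) = 0$.

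For the derivative, differentiating under the integral sign gives
\[
L_p'(E,K,1) \;=\; \int_{K_{p,1}^\times}\! \log\langle\alpha\rangle\, d\mu_{f,K}(\alpha) \;=\; \int_{K_{p,1}^\times}\! \log(\alpha)\, d\mu_{f,K}(\alpha),
\]
the last equality holding because $p$ is inert in $K$, so every $\alpha\in K_{p,1}^\times$ is a $p$-adic unit and the Iwasawa logarithm annihilates the (finite) torsion component. Pulling back along $A$ and using
\[
\log A(t) \;=\; \log(\tau_p - t) - \log(\overline{\tau}_p - t) \;=\; \int_{\overline{\tau}_p}^{\tau_p} \frac{dz}{z - t},
\]
then swapping the order of integration yields
\[
L_p'(E,K,1) \;=\; \int_{\overline{\tau}_p}^{\tau_p} \biggl(\int_{\PP_1(\Q_p)}\! \frac{dz}{z-t}\, d\mu_f[\tau_\infty](t)\biggr) \;=\; \int_{\overline{\tau}_p}^{\tau_p} \omega_f[\tau_\infty] \;=\; \NS(\tau),
\]
where the middle equality is the defining formula \eqref{eqn:hp-ppt} for the Teitelbaum transform.

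The main technical obstacle will be to make all these integrals rigorous: $\mu_f[\tau_\infty]$ is a priori only a distribution taking values in the infinitely generated abelian group $E(K_\infty)$, whereas both the Mazur--Mellin integrand $\langle\alpha\rangle^{s-1}$ and the Poisson kernel are merely continuous, not locally constant. The boundedness needed to integrate continuous functions should follow from the identification \eqref{keyidentity} of the edge values of $c_f[\tau_\infty]$ with the trace-compatible Heegner system of Remark \ref{Trace-compatible}, which after $p$-adic completion places these values in a compact pro-$p$ module. Once this boundedness is in hand, Fubini's theorem for $p$-adic measures against continuous functions applies to the double integral in standard fashion, completing the argument.
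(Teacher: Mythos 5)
Your proposal is correct and follows essentially the same route as the paper's proof: both arguments rest on the total measure of $\mu_f[\tau_\infty]$ being zero (for the vanishing at $s=1$), and on applying Fubini to the Teitelbaum transform so that the inner $z$-integral of the Poisson kernel produces $\log A(t)$, after which the change of variables $\alpha = A(t)$ identifies the result with $\int_{K_{p,1}^\times}\log\langle\alpha\rangle\,d\mu_{f,K}(\alpha) = L_p'(E,K,1)$. The only difference is cosmetic — you run the chain of equalities from $L_p'(E,K,1)$ to $\NS(\tau)$ rather than the reverse — and your closing remarks on boundedness via the Heegner-point identification \eqref{keyidentity} correctly flag the technical point the paper handles in the preceding subsections.
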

\begin{proof}
The vanishing of $L_p(E,K,1)$ follows from the fact that $\mu_{f}[\tau_\infty]$, and hence also 
$\mu_{f,K}$, have total measure zero.
By the definition of $\NS(\tau)$ combined with 
\eqref{eqn:hp-ppt},
$$ \NS(\tau) = \int_{\bar \tau_p}^{\tau_p} \omega_f[\tau_\infty] =  \int_{\bar \tau_p}^{\tau_p}
\left(\int_{\PP_1(\Q_p)} \frac{1}{z-t} d\mu_f[\tau_\infty](t) \right) dz.$$
Interchanging the order of integration  and integrating with respect to $z$
gives
$$ \NS(\tau) =  \int_{\PP_1(\Q_p)}
\log \left( \frac{\tau_p-t}{\bar\tau_p-t}\right) d\mu_f[\tau_\infty](t) = \int_{\PP_1(\Q_p)}
\log A(t) \cdot d\mu_f[\tau_\infty](t). $$
The change of variables $\alpha=A(t)$ 
can be used to rewrite this last expression as an integral over $K_{p,1}^\times$,
$$ \NS(\tau) = \int_{K_{p,1}^\times} \log(\langle\alpha \rangle) \cdot d\mu_{f,K}(\alpha) = L_p'(E,K,1),$$
where the last equality follows  directly from 
\eqref{eqn:def-LpEK}.
\end{proof}
  As explained in \cite{bd-rigid}, 
  the quantity $L_p'(E,K,1)$ is directly related to the {\em Kolyvagin derivative}
  of the norm-compatible collection $y_n= c_f[\tau_\infty](e_n)\in E(K_n)$ of Heegner points, 
 where  $\{e_n\}_{n\ge1}\subset\mathcal{E}(\cT)$ 
is the  sequence of adjacent edges
that was fixed  in Section \ref{IsogenyTree}. 
 More precisely, for every $n\ge1$, the collection $\{\alpha\cdot U_{e_n}\}_{\alpha\in K_{p,1}^\times}$  is a finite   covering of $\PP_1(\Q_p)$ 
 by compact open subsets, 
 which are equal to the cosets $\{\alpha\cdot \mathscr{U}_n\}_{\alpha\in K_{p,1}^\times}$ under the homeomorphism \eqref{homeo}.
 Rewriting $L_p'(E,K,1)$ as the limit when $n\rightarrow \infty$ of  the Riemann sums attached to these coverings,
  equation \eqref{keyidentity} gives 
  \begin{equation}
\label{L-as-Kol}
\begin{split}
\NS(\tau) &=\int_{K_{p,1}^\times} \log(\alpha)  d\mu_{f,K}(\alpha)\\
&=\underset{n\rightarrow\infty}{\lim}\ \sum_{\alpha\in K_{p,1}^\times/\mathscr{U}_n}c_f[\tau_\infty](\alpha\cdot e_n)\otimes \log(\langle \alpha\rangle)\\
&=\underset{n\rightarrow \infty}{\lim}\ \sum_{\alpha\in K_{p,1}^\times/\mathscr{U}_n}y_n^{\mrm{rec}(\alpha)} \otimes\log(\langle\alpha\rangle)
\end{split}
\end{equation}
(see \cite[\S 6]{CDuniformization} for related discussions).

\subsection{Kolyvagin's cohomology classes}
After choosing a topological generator of $K_{p,1}^\times$, Lemma \ref{lemma:ns-inv-inv} allows the mock plectic
invariant  $\NS(\tau)$ to be viewed (non-canonically)
as an element of $\widehat{E(K_\infty)}$ fixed by $G_\infty$.
Consider the natural injective map
$$ E(K)\otimes \Z_p \longrightarrow \widehat{E(K_\infty)}^{G_\infty}.$$ 
It need not be  surjective in general: the group
$\widehat{E(K_\infty)}$ fails to   satisfy the principle of Galois descent.
Moreover, while it is relatively straightforward to establish the infinitude of $\widehat{E(K_\infty)}^{G_\infty}$, the Birch and Swinnerton-Dyer conjecture predicts the finitude of $E(K)$ when $L(E/K,1)\ne 0$. Even when $L(E/K,s)$ vanishes at $s=1$, and hence has a zero of order $\ge 2$, 
 establishing that $E(K)\otimes \Z_p$ is  infinite
seems very hard to  do  unconditionally.

A useful handle on the group $\widehat{E(K_\infty)}^{G_\infty}$
is obtained by relating it to  Galois
 cohomology.
 Let $H^1(K_m, E[p^n])$ be the first Galois cohomology of $K_m$ with values in the module of $p^n$-division points of $E$, and let 
 $$ H^1(K_m, T_p(E)) = \varprojlim_n\hspace{1mm} H^1(K_m, E[p^n]),$$
 the inverse limit being taken relative to the multiplication by $p$ maps $E[p^{n+1}] \lra E[p^n]$. 
 \begin{lemma} 
 \label{lemma:p-tors-bounded}
 The module  $E[p^n](K_m)$  is trivial for all  $m$ and $n$.
 \end{lemma}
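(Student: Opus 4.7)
The plan is to reduce to showing $E[p](K_m)=0$ for every $m$, and then to exploit the abelian anticyclotomic nature of the local extension above $p$ together with Tate's $p$-adic uniformisation.

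Since $E[p^n]$ is a finite abelian $p$-group, $E[p^n]^{G_{K_m}}$ is trivial if and only if its socle $E[p]^{G_{K_m}}$ is, so it suffices to prove $E[p]^{G_{K_\infty}} = 0$. Localising at the unique place $\mathfrak{P}$ of $K_\infty$ above $p$ and setting $L_m := (K_m)_{\mathfrak{P}}$, $L_\infty := \bigcup_m L_m$, the inclusion $E[p](K_m) \hookrightarrow E[p](L_m)$ reduces the problem to $E[p](L_\infty) = 0$, where $L_\infty/K_\mathfrak{p}$ is an infinite abelian totally ramified extension of $K_\mathfrak{p} \simeq \Q_{p^2}$. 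Because $E$ has multiplicative reduction at $p$ and $p$ is inert in $K$, the base change $E_{K_\mathfrak{p}}$ has split multiplicative reduction, and Tate's uniformisation gives an exact sequence of $G_{K_\mathfrak{p}}$-modules
\[
0 \longrightarrow \mu_p \longrightarrow E[p] \longrightarrow \Z/p\Z \longrightarrow 0
\]
whose class in $H^1(G_{K_\mathfrak{p}}, \mu_p) \cong K_\mathfrak{p}^\times/(K_\mathfrak{p}^\times)^p$ is represented by the Tate parameter $q$. Taking $G_{L_\infty}$-invariants shows that a non-trivial element of $E[p](L_\infty)$ requires either (a) $\zeta_p \in L_\infty$, or (b) $q \in (L_\infty^\times)^p$.

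To exclude case (a) I would invoke local class field theory: $\mathrm{Gal}(L_\infty/K_\mathfrak{p})$ is identified with the norm-one group $K_{p,1}^\times$, whose maximal tame quotient is $\F_{p^2}^\times/\F_p^\times$, cyclic of order $p+1$. On the other hand $K_\mathfrak{p}(\zeta_p)/K_\mathfrak{p}$ is tamely ramified of degree $p-1$, and for $p \ge 5$ one has $\gcd(p-1,p+1) \le 2 < p-1$, so $K_\mathfrak{p}(\zeta_p) \not\subseteq L_\infty$. Case (b) then reduces to the same obstruction: since $L_m/K_\mathfrak{p}$ is abelian while the degree-$p$ radical extension $K_\mathfrak{p}(q^{1/p})/K_\mathfrak{p}$ is Galois only when $\zeta_p \in K_\mathfrak{p}$, the assumption $q \in (L_m^\times)^p$ forces $q \in (K_\mathfrak{p}^\times)^p$. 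Since $K_\mathfrak{p}/\Q_p$ is unramified of degree $2$ coprime to $p$, this in turn means $q \in (\Q_p^\times)^p$.

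The main obstacle is this final implication: ruling out $q \in (\Q_p^\times)^p$ for the Tate parameter of an elliptic curve of conductor $p$. This is equivalent to asking that $\bar\rho_E$ not be finite flat at $p$, i.e.\ that $p \nmid \ord_p(j_E)$. When $\bar\rho_E$ is irreducible, this is excluded by Ribet's level-lowering combined with the vanishing of $S_2(\SL_2(\Z))$: finiteness of $\bar\rho_E$ at $p$ would force it to arise from a cuspidal newform of level $1$ and weight $2$. The finitely many primes for which $\bar\rho_E$ can instead be reducible are controlled by Mazur's classification of rational $p$-isogenies.
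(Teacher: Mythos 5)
Your argument is correct in substance, but it takes a genuinely different and considerably heavier route than the paper. The paper's proof is purely global and three lines long: since $E$ is semistable of prime conductor $p\ge 11$, Mazur's theorem gives that $\bar\rho_{E,p}$ is \emph{surjective}, hence has non-solvable image, so $E[p](L)=0$ for \emph{every} solvable extension $L/\Q$ — in particular for the ring class fields $K_m$. Your proof instead localises at the unique prime above $p$ and exploits two special features of the situation: the Tate uniformisation of $E$ over $K_{\mathfrak p}=\Q_{p^2}$, and the fact that the anticyclotomic tower is totally ramified with tame quotient of order dividing $p+1$, hence cannot contain the degree-$(p-1)$ cyclotomic extension $K_{\mathfrak p}(\zeta_p)$. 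This buys you a stronger \emph{local} statement ($E[p]$ has no points over the whole local anticyclotomic tower), and the Kummer-theoretic steps — abelian-ness forcing $q\in(K_{\mathfrak p}^\times)^p$, then descent to $(\Q_p^\times)^p$ using $\mu_p\cap K_{\mathfrak p}=1$ — all check out. The cost is the endgame: to rule out $p\mid\ord_p(q)$ you must invoke Ribet's level-lowering together with $S_2(\SL_2(\Z))=0$, which in turn needs irreducibility of $\bar\rho_{E,p}$; your appeal to "Mazur's classification" for the reducible case should be sharpened to the observation that the conductor equals $p$, so $p\ge 11$ and Mazur's theorem for semistable curves already gives surjectivity (the very input the paper uses). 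In other words, your route does not avoid Mazur — it uses him \emph{plus} Ribet — so the paper's argument is strictly more economical, though yours illuminates why the specific (non-cyclotomic, totally ramified) shape of the anticyclotomic tower already suffices locally.
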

 \begin{proof}
 Since $E$ is semistable of prime conductor $p\ge 11$,
 its mod $p$ Galois representation is surjective (\cite{Mazur-Goldfeld}, Theorem 4). 
It follows that the mod $p$ Galois representation has non-solvable image and therefore $E[p](L)$ is trivial for any solvable extension $L/\Q$. 
 \end{proof}
 
\begin{lemma}
\label{lemma:res-iso}
The restriction map
$$ H^1(K, E[p^n]) \lra H^1(K_m, E[p^n])^{G_m}$$
is an isomorphism.
\end{lemma}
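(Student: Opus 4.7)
The plan is to apply the inflation-restriction exact sequence associated to the extension $K_m/K$ with coefficients in $E[p^n]$. Concretely, the five-term exact sequence reads
\[
0 \longrightarrow H^1\big(G_m, E[p^n](K_m)\big) \longrightarrow H^1(K, E[p^n]) \longrightarrow H^1(K_m, E[p^n])^{G_m} \longrightarrow H^2\big(G_m, E[p^n](K_m)\big).
\]
Once one knows that the two outer groups vanish, the middle map is forced to be an isomorphism, which is exactly what we want.

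The vanishing is an immediate consequence of the previous lemma. Indeed, Lemma \ref{lemma:p-tors-bounded} asserts that $E[p^n](K_m) = 0$ for every $m,n$; hence the group cohomology of $G_m$ with coefficients in this trivial module is zero in every positive degree. In particular the $H^1$ and $H^2$ terms flanking the restriction map are both zero, and the restriction map is therefore an isomorphism.

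There is essentially no obstacle here: the whole argument is a one-line application of inflation-restriction together with the already-established triviality of $E[p^n](K_m)$. The only mild subtlety is to note that $K_m/K$ is Galois with group $G_m$, so that the inflation-restriction sequence genuinely applies; this is clear since $K_m$ is a ring class field of $K$, hence abelian over $K$.
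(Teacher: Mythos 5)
Your proof is correct and is exactly the argument the paper gives: the inflation-restriction sequence for $K_m/K$, with both outer terms killed by the vanishing of $E[p^n](K_m)$ from Lemma \ref{lemma:p-tors-bounded}. Nothing to add.
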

\begin{proof}
The kernel and cokernel in
the  inflation-restriction sequence
$$
\xymatrix{ 
H^1(G_m,E[p^n](K_m)) \ar[r]&    H^1(K, E[p^n]) \ar[r]^{\!\!\! \!\!\!\!\!\!  \rm res}&    H^1(K_{m}, E[p^n])^{G_{m}}\ar[r]& 
H^2(G_m,E[p^n](K_m))
}
$$
  are trivial by  Lemma
 \ref{lemma:p-tors-bounded},
 and the claim follows.
\end{proof}
Denote by $\delta_n$ the mod $p^n$ Kummer map
$$ \delta_n: E(K_m)/p^n E(K_m) \lra 
H^1(K_m, E[p^n]),$$
and by 
$$ \delta_\infty: E(K_m)\otimes \Z_p   \lra 
H^1(K_m, T_p(E))$$
the map induced on the inverse limits.
Let $P_\infty = \{ P_n \}_{n\ge 1} \in {\widehat{E(K_\infty)}}^{G_\infty}$, 
where 
$$P_n \in (E(K_{m_n})\otimes \Z/p^n\Z)^{G_{m_n}} $$ for   suitable integers $m_n\ge 1$. Lemma \ref{lemma:res-iso}
shows that each $\delta_n(P_n)\in H^1(K_{m_n}, E[p^n])^{G_{m_n}}$ 
is the restriction of a unique class $\kappa_n \in H^1(K, E[p^n])$. The $\kappa_n$
 are compatible under the natural multiplication by $p$ maps from $E[p^{n+1}]$ to $E[p^n]$, and the assignment $(P_n)_{n\ge 1} \mapsto (\kappa_n)_{n\ge 1}$ determines
 a canonical inclusion
$$    \widehat{E(K_\infty)}^{G_\infty} \lhook\joinrel\longrightarrow H^1(K,T_p(E)).$$
It will be convenient for the rest of this
note  to identify 
 $\NS(\tau)$ with its image in $H^1(K,T_p(E))$
 under this map.

 \subsection{Local properties of 
 the mock plectic invariant}
For each place $v$ of $K$, let 
$$\res_v: H^1(K,T_p(E)) \lra H^1(K_v, T_p(E))$$ be the local restriction map to a decomposition group at $v$.  
The local cohomology group $H^1(K_p,T_p(E))$ is equipped with a natural two-step filtration
$$ 0 \lra  H^1_f(K_p, T_p(E)) {\lra} H^1(K_p,T_p(E)) \lra 
H^1_{\rm sing}(K_p, T_p(E)) \lra 0,$$
where $H^1_f(K_p, T_p(E)):= \delta_\infty(E(K_p) \otimes \Z_p)$.  
\begin{definition}
The {\em pro-$p$-Selmer group} of $E$
is the group, denoted $H^1_f(K, T_p(E))$, of global classes $\kappa\in H^1(K,T_p(E))$ satisfying
$$\res_p(\kappa) \in H^1_f(K_p, T_p(E)).$$ 
 \end{definition}
The reader will note that in the setting of pro-$p$ Selmer groups, no local conditions need to be imposed at the places $v\ne p$ because the Kummer map $\delta_{v}: E(K_v) \otimes \Z_p \lra H^1(K_v, T_p(E))$ is an isomorphism.
The only possible obstruction for a global class to lie in the pro-$p$ Selmer group therefore occurs at the place $p$.
Let 
 $$\NS_p(\tau):= {\rm res}_p(\NS(\tau))  \in H^1(K_p,T_p(E)) $$ 
 denote the restriction of  the global class 
 $\NS(\tau)$ to the decomposition group at $p$.
The first important result about the mock plectic invariant is that it is non-Selmer 
precisely when $L(E/K,1) \ne 0$.
\begin{theorem}
 \label{SelmerClass}
The  class $\NS_p(\tau)$  lies in $H^1_f(K_p, T_p(E))$  if and only if $L(E/K,1)= 0$.
\end{theorem}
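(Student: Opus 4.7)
The plan is to compute the image $\partial_p\NS_p(\tau)$ of $\NS_p(\tau)$ in the singular quotient of local cohomology, and to show this singular class is a non-zero multiple of $L(E/K,1)$, so that it vanishes precisely when $L(E/K,1)=0$.

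Since $E$ has multiplicative reduction at $p$ and $K_p/\Q_p$ is unramified (as $p$ is inert in $K$), Tate's theorem provides a short exact sequence $0 \to \Z_p(1) \to T_p(E) \to \Z_p \to 0$ of $G_{K_p}$-modules. The induced long exact sequence in local cohomology exhibits $H^1_f(K_p, T_p(E))$ as the kernel of a surjective ``singular'' map
\[
\partial_p\colon H^1(K_p, T_p(E)) \twoheadrightarrow H^1_{\rm sing}(K_p, T_p(E)),
\]
so a class is locally Selmer at $p$ precisely when its singular component vanishes.

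To compute $\partial_p\NS_p(\tau)$, I would exploit the expression \eqref{L-as-Kol} of $\NS(\tau)$ as a limit of Kolyvagin-type sums over Galois-conjugates of the Heegner points $y_n \in E(K_n)$, together with the local Tate parametrization of each $y_n$ at the unique prime $\wp_n$ of $K_n$ above $p$. Although each individual $y_n$ is a global point, and hence has Selmer local image, the class $\NS(\tau)$ arises from a $G_\infty$-invariant element of $\widehat{E(K_\infty)}$ via Galois descent rather than directly from $E(K)$, and thus can acquire a non-trivial singular component. Applying the change of variables $A$ of \eqref{homeo} to pass from $\PP_1(\Q_p)$ to $K_{p,1}^\times$, one obtains
\[
\partial_p\NS_p(\tau)  = \int_{K_{p,1}^\times} \log\langle\alpha\rangle \, d\mu_{f,K}^{\rm ord}(\alpha),
\]
where $\mu_{f,K}^{\rm ord}$ denotes the push-forward of $\mu_{f,K}$ through the valuation map coming from the Tate uniformization at $\wp$.

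The decisive input is a Bertolini--Darmon style formula, in the spirit of \cite[\S 2]{bd-Mumford}, identifying $\mu_{f,K}^{\rm ord}$ with a non-zero scalar multiple of the classical anticyclotomic modular-symbol measure on $G_\infty$ that interpolates the central critical values $L(E/K,\chi,1)$ for $\chi$ ranging over finite-order characters of $G_\infty$. Combined with a Waldspurger-type non-vanishing of the toric period at the trivial character, this yields $\partial_p\NS_p(\tau) = c\cdot L(E/K,1)$ for some explicit non-zero constant $c$, from which the theorem follows at once. The main obstacle is precisely this explicit Gross--Zagier/Bertolini--Darmon evaluation of the singular projection of the Heegner distribution: it is the CM anticyclotomic analogue, for $p$ inert, of the Mazur--Tate--Teitelbaum computation sketched in the proof of Theorem \ref{thm:mttgs}, and the Waldspurger-style non-vanishing needed to ensure $c\ne 0$ plays the role of \cite[Lemma 2.17]{darmon-hpxh}.
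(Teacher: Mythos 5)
Your overall strategy --- project $\NS_p(\tau)$ to $H^1_{\rm sing}(K_p,T_p(E))$ and identify the result, via a Bertolini--Darmon special-value formula, with a nonzero multiple of $L(E/K,1)$ --- is the same as the paper's, and you correctly locate the decisive arithmetic input: it is the rigid analytic Gross--Zagier formula of \cite[Theorem 5.1]{bd-rigid}, which plays exactly the Waldspurger-type role you describe. The gap is your displayed formula $\partial_p\NS_p(\tau)=\int_{K_{p,1}^\times}\log\langle\alpha\rangle\,d\mu_{f,K}^{\rm ord}(\alpha)$. The singular projection does not commute with the Kolyvagin-derivative integral in this way: $\NS(\tau)$ only becomes a class in $H^1(K,T_p(E))$ after a genuine Galois descent from $\widehat{E(K_\infty)}^{G_\infty}$, and the local behaviour at $p$ of the descended class has to be read off from an explicit descent cocycle, not by post-composing the measure with a coefficient map. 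Concretely, writing $\NS^\times(\tau)=\varprojlim_n D_ny_n$, the image of the descended class in $H^1(K_p,E)$ is inflated from $H^1(G_n,E(K_{n,p}))$ and represented by McCallum's cocycle $\sigma\mapsto -(\sigma-1)D_n(y_n)/p^{n-1}$ \cite[Lemma 4.1]{mccallum}; the telescoping identity $(\sigma-1)D_n=|G_n|-\Tr_{K_n/K}$ together with $\Tr_{K_n/K}(y_n)=0$ turns this into $\sigma\mapsto -(p+1)\,y_n$ (up to torsion), so that under the injection $H^1(G_n,E(K_{n,p}))[p^{n-1}]\hookrightarrow\Phi_n[p^{n-1}]$ of \cite[Lemma 6.7]{bd-rigid} the singular part is controlled by the \emph{underived} image of $y_n$ itself in the component group $\Phi_n$ of the N\'eron model over $K_{n,p}$.

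By contrast, your integral computes the $\log\langle\alpha\rangle$-weighted sum of the component-group images of the conjugates $y_n^{\mathrm{rec}(\alpha)}$. Since $G_n$ acts trivially on $\Phi_n$ (the extension $K_{n,p}/K_p$ is totally ramified and $E$ is split multiplicative over $K_p$), these images all coincide, and the weights sum to $\sum_{\alpha\in G_n}\log\langle\alpha\rangle\to\int\log\langle\alpha\rangle\,d(\mathrm{Haar})=0$ by the symmetry $\alpha\mapsto\alpha^{-1}$ of $K_{p,1}^\times$; so your expression vanishes identically and cannot detect $L(E/K,1)$. Once the singular projection is correctly identified with $\varprojlim_n\partial(y_n)\in\varprojlim_n\Phi_n$, the theorem follows from \cite[Theorem 5.1]{bd-rigid} exactly as you anticipate, with no need for a second exceptional-zero evaluation of a derivative. (A minor further point: identifying $H^1_f(K_p,T_p(E))$ with the kernel of the map induced by the Tate sequence $0\to\Z_p(1)\to T_p(E)\to\Z_p\to 0$ requires care in the split multiplicative case, where the Kummer image and the ordinary filtration differ by the line attached to the Tate period; this does not affect the main argument.)
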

\begin{proof}[Sketch of proof]  
Recall that the ring class field $K_n$ of $K$ of conductor $p^n$ is a cyclic Galois extension of $K$ with Galois group $G_n\cong K_{p,1}^\times/\mathscr{U}_n$. The Kolyvagin derivative of the Heegner point $y_n\in E(K_n)$ is defined as 
\[
D_ny_n:=\sum_{\alpha\in K_{p,1}^\times/\mathscr{U}_n}y_n^{\mrm{rec}(\alpha)}\otimes\alpha\ \ \in \ \ E(K_n)\otimes K_{p,1}^\times/\mathscr{U}_n
\]
and is fixed by the action of $G_n$ because $\mrm{Tr}_{K_n/K}(y_n)=0$ (cf.~\cite[(8)]{bd-Mumford} and  \cite[Proposition 3.6]{gross}). Then, just as in equation \eqref{L-as-Kol} one sees that
\[
\NS^\times(\tau)  =\varprojlim_n\hspace{1mm} D_ny_n.
\]
By choosing a topological generator of $K_{p,1}^\times$ we can consider the image of $\NS^\times(\tau)$ in $H^1(K,T_p(E))$ and study its local properties via the following   diagram taken from \cite[(4.2)]{gross}:
\[\xymatrix{
&&&0\ar[d]\\
&&0\ar[d]&H^1(G_n,E(K_n))\ar[d]^{\mrm{Inf}}\\
E(K_n)^{\mrm{Tr}_{K_n/K}=0}\ar[rd]^{D_n}& &H^1(K,E[p^{n-1}])\ar[d]^{\mrm{Res}}\ar[r]&H^1(K,E)\ar[d]^{\mrm{Res}}\\
0\ar[r]&\big(E(K_n)/p^{n-1}\big)^{G_n}\ar[r]^-\delta&H^1(K_n,E[p^{n-1}])^{G_n}\ar[d]\ar[r]&H^1(K_n,E)^{G_n}\\
&&0&.
}\]
By construction, the image of $D_ny_n$ in $H^1(K,E)$ belongs to the image of $H^1(G_n,E(K_n))$ under inflation and  it can be represented by the following $1$-cocycle \cite[Lemma 4.1]{mccallum}:
\begin{equation}\label{explicitMcCallum}
G_n\ni\sigma\mapsto-\frac{(\sigma-1)D_n(y_n)}{p^{n-1}},
\end{equation}
where $\frac{(\sigma-1)D_n(y_n)}{p^{n-1}}$ denotes the unique $p^{n-1}$-th root of $(\sigma-1)D_n(y_n)$ in $E(K_n)$. Now, we can use the explicit description \eqref{explicitMcCallum} to study the image $\partial_p\NS^\times(\tau)$ of $\NS^\times(\tau)$ in  $H^1_\mrm{sing}(K_p,T_p(E))$.

\medskip
\noindent Denote by $\Phi_n$ the group of connected components of the special fiber of the N\'eron model of $E$ over the $p$-adic completion $K_{n,p}$ of $K_n$. By \cite[Lemma 6.7]{bd-rigid} the reduction map $E(K_{n,p})\to \Phi_n$ produces  an injection
    \[
    H^1(G_n, E(K_{n,p}))[p^{n-1}] \lhook\joinrel\longrightarrow \Phi_n[p^{n-1}],
    \]
    where $H^1(G_n,\Phi_n)=\mrm{Hom}_\Z(G_n,\Phi_n)$ is identified with $\Phi_n$ by evaluation at a generator of $G_n$. A direct computation using \eqref{explicitMcCallum} and \cite[(3.5)]{gross} then shows that $ \partial_p\NS^\times(\tau) $  vanishes exactly when the compatible collection of Heegner points $\{y_n\}_{n\ge1}$ maps to a finite order element in the projective limit $\{\Phi_n\}_{n\ge1}$.
By adapting the proof of \cite[Theorem 5.1]{bd-rigid} one sees that this happens precisely when the $L$-value $L(E/K,1)$ vanishes.  
\end{proof}

Interestingly, when $L(E/K,1)\ne 0$, the mock plectic invariant $\NS(\tau)$ suffices to prove that the Mordell-Weil group of $E'$ -- the quadratic twist of $E$ attached to $K$ -- is finite.

\begin{theorem}\label{application-nonSelmer}
If $L(E/K,1)\ne 0$, then $E'(\bb{Q})$ is finite.
\end{theorem}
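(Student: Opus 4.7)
The strategy is to use $\NS(\tau)$ as a non-Selmer cohomology class and invoke Poitou--Tate global duality in the style of Kolyvagin to bound the Mordell--Weil rank in one eigenspace of complex conjugation $\sigma\in\Gal(K/\Q)$. The Kummer map embeds $E'(\Q)\otimes\Z_p$ into the minus eigenspace $H^1_f(K, T_p(E))^-$, so it suffices to show that this minus part of the pro-$p$ Selmer group is finite. Combining the expression \eqref{L-as-Kol} of $\NS(\tau)$ as the limit of the Kolyvagin derivatives $D_n y_n$ with the transformation rule $y_n^\sigma = -a_p(E)\cdot y_n^{\tau_n}$ for suitable $\tau_n\in G_n$ (a consequence of the Atkin--Lehner symmetry on $X_0(p)$) and with the inversion action of $\sigma$ on $K_{p,1}^\times$, the trace-zero relation $\mathrm{Tr}_{K_n/K}(y_n)=0$ yields the identity $\sigma\cdot \NS(\tau) = -a_p(E)\cdot \NS(\tau)$. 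Since the root number of $L(E,s)$ equals $a_p(E)$ (for $E$ of prime conductor $p$ with multiplicative reduction), and since $L(E/K,s)=L(E,s)\cdot L(E',s)$, the hypothesis $L(E/K,1)\neq 0$ forces $a_p(E)=+1$, whence $\NS(\tau)\in H^1(K, T_p(E))^-$.

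For any class $s\in H^1_f(K, T_p(E))^-$ I next invoke global reciprocity. At every place $v\neq p$ the local Kummer map is surjective, so $\res_v(\NS(\tau))$ automatically lies in $H^1_f(K_v, T_p(E))$, and the local Tate pairing vanishes on $H^1_f\otimes H^1_f$; the sum formula therefore collapses to
\[
\langle \res_p(\NS(\tau)),\res_p(s)\rangle_p=0.
\]
Because $\Gal(K_p/\Q_p)$ acts trivially on the target $H^2(K_p,\Z_p(1))\cong \Z_p$ of the local pairing (Brauer invariants are Galois-stable), the pairing respects the $\sigma$-eigenspace decomposition and restricts to a perfect duality between $H^1_f(K_p, T_p(E))^-$ and $H^1_{\rm sing}(K_p, T_p(E))^-$. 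By Theorem \ref{SelmerClass} the singular part of $\res_p(\NS(\tau))$ is non-trivial, and thus defines a non-zero linear functional on $H^1_f(K_p, T_p(E))^-\cong E'(\Q_p)\otimes\Z_p$, a free $\Z_p$-module of rank one (since $p$ is inert in $K$ and $E'$ therefore retains multiplicative reduction at $p$). Consequently the image of $H^1_f(K,T_p(E))^-$ inside $E'(\Q_p)\otimes\Z_p$ is finite.

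The main obstacle is the remaining step: controlling the kernel of the localization $\res_p\colon H^1_f(K, T_p(E))^-\to E'(\Q_p)\otimes\Z_p$ cannot be done using the single class $\NS(\tau)$ alone. To complete the argument I would invoke the full Euler-system structure carried by the CM Heegner system $\{y_n\}_{n\ge 1}$ underlying $\NS(\tau)$ via a Chebotarev descent using auxiliary Kolyvagin primes in the spirit of \cite[\S6]{bd-Mumford}; alternatively, one may appeal directly to the anticyclotomic Iwasawa main conjecture of Bertolini--Darmon applied to the leading term $L_p'(E,K,1)=\NS(\tau)$ (cf.~Theorem \ref{prop:BD-old}), which packages the required Selmer control and yields the finiteness of $E'(\Q)$ as soon as this derivative is non-zero.
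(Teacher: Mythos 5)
Your first two paragraphs reproduce the paper's argument (which follows \cite[Cor.~7.2]{bd-rigid}): $L(E/K,1)\neq 0$ forces split multiplicative reduction, $\NS(\tau)$ lies in the minus eigenspace for complex conjugation, Poitou--Tate reciprocity collapses to the local pairing at $p$, and by Theorem \ref{SelmerClass} the singular image of $\NS_p(\tau)$ spans $H^1_{\rm sing}(\Q_p,V_p(E'))$, which is dual to the one-dimensional space $E'(\Q_p)\otimes\Q_p$. Up to that point you are on the paper's track.

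The gap is in your last paragraph, where you declare the argument incomplete and reach for Kolyvagin primes or the anticyclotomic main conjecture. Neither is needed, because the theorem asserts the finiteness of the \emph{Mordell--Weil group} $E'(\Q)$, not of the pro-$p$ Selmer group. You have shown that the image of $E'(\Q)\otimes\Q_p$ in $E'(\Q_p)\otimes\Q_p$ is annihilated by a functional that spans the one-dimensional dual, hence that image is zero. It remains only to observe that the localization map $E'(\Q)\otimes\Q_p\to E'(\Q_p)\otimes\Q_p$ kills no point of infinite order: a non-torsion $P\in E'(\Q)$ remains non-torsion in $E'(\Q_p)\cong\Z_p\times(\text{finite})$, so its image in $E'(\Q_p)\otimes\Q_p$ is non-zero. (This is exactly the injectivity statement the paper invokes; it replaces any control of $\ker(\res_p)$ on the full Selmer group.) Combining the two observations gives $E'(\Q)\otimes\Q_p=0$, i.e.\ $E'(\Q)$ is finite. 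The ``kernel of localization'' you worry about is an obstruction only to bounding the Selmer group itself, or to proving the finiteness of all of $E(K)$ --- and indeed the paper's remark immediately after the theorem notes that the latter requires tame (Kolyvagin-prime) deformations of the construction, which is probably the statement you were conflating with the one actually being proved.
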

\begin{proof}[Sketch of proof]
We follow the proof of \cite[Corollary 7.2]{bd-rigid}. By Theorem \ref{SelmerClass} we know that $\NS(\tau)$ is a global class ramified only at $p$. The claim is then that the localization map $E'(\bb{Q})\otimes\bb{Q}_p\hookrightarrow E'(\bb{Q}_p)\otimes\bb{Q}_p$ -- which is always injective -- is the zero morphism. 

\smallskip
 Under our assumptions, the hypothesis $L(E/K,1)\not=0$ also implies that $E$ has split multiplicative reduction at $p$, hence the class $\NS(\tau)$ lives in the minus-eigenspace for the action of complex conjugation \cite[Prop.~6.5]{bd-rigid}  and its image in $H^1_{\rm sing}(K_p, V_p(E))$ satisfies $H^1_{\rm sing}(\bb{Q}_p, V_p(E'))=\bb{Q}_p\cdot \NS_p(\tau)$.  The claim then follows because local Tate duality induces the identification  
 \[
 E'(\bb{Q}_p)\otimes\bb{Q}_p\overset{\sim}{\longrightarrow}H^1_{\rm sing}(\bb{Q}_p, V_p(E'))^\vee\qquad Q\mapsto \big\langle Q, -\big\rangle_p,
 \]  
 and Poitou--Tate duality implies that any point $P\in E(K)\otimes\bb{Q}_p$ satisfies
\[
0=\sum_\ell\big\langle \mrm{res}_\ell(P), {\rm res}_\ell (\NS(\tau))\big\rangle_\ell=\big\langle \mrm{res}_p(P), \NS_p(\tau)\big\rangle_p.
\]
(See \cite[Prop.~6.8]{bd-rigid}.)
\end{proof}

\begin{remark}
   To obtain the finiteness of $E(K)$ from $L(E/K,1)\ne 0$ one also needs to consider tame deformations of mock plectic invariants (cf.~\cite[Rem.~p.132]{bd-rigid}). 
\end{remark}

\subsection{Elliptic curves of rank two}
When $L(E/K,1)=0$, the local class 
$\NS_p(\tau)$ belongs to $H^1_f(K_p,T_p(E))\otimes K_p$, 
i.e.,  the global class $\NS(\tau)$  lies in the pro-$p$ Selmer group of $E$ over $K$. 
It is not expected to be trivial in general: in fact, as we now proceed to explain, it should  provide  a non-trivial Selmer class  in settings where  
 $L(E/K,s) = L(E,s) L(E',s)$ has a double zero at $s=1$, and even when  the   factor 
 $L(E',s)$ admits such a double zero.

\smallskip
Since the \emph{first} derivative of the $p$-adic $L$-function $L_p(E,K,s)$ computes the mock plectic invariant $\NS(\tau)$ according to Theorem \ref{prop:BD-old}, we can predict when $L(E/K,1)=0$ and $\NS(\tau)$ does not vanish by analyzing the  anticyclotomic Birch and Swinnerton-Dyer conjecture of \cite[Conj.\hspace{-1mm} 4.1]{bd-Mumford}. 
In the terminology of \cite[Introduction $\&$ Definition 2.3]{bd-Mumford}, the triple $(E,K,p)$ falls into the non-split exceptional indefinite case, where the adjective \emph{non-split exceptional} refers to the fact that the elliptic curve has multiplicative reduction at the prime $p$ inert in $K$ (and not to the fact that  $E$  has necessarily  non-split multiplicative reduction at $p$). If we set $E^+ :=E$, denote by $E^- = E'$ the quadratic twist of $E$ attached to $K$,  and define
\[
\delta^\pm=\begin{cases}
    1&\text{if}\ a_p(E^\pm)=+1\\
    0&\text{if}\ a_p(E^\pm)=-1,
    \end{cases}
\]
then \cite[Conjecture 4.1]{bd-Mumford} suggests that  the order of vanishing of  $L_p(E,K,s)$ at $s=1$ is
\begin{equation}\label{orderVAN}
\varrho=\mrm{max}\big\{1,\hspace{1mm} r_\mrm{alg}(E^\pm/\bb{Q})+\delta^\pm -1\big\},
\end{equation}
which satisfies $2\varrho\ge \mrm{max}\big\{2,\hspace{1mm} r_\mrm{alg}(E/K)\big\}$. 
\begin{remark}
    The $p$-adic $L$-function $L_p(E,K,s)$ can be obtained from the $p$-adic $L$-function considered in \cite[Conj.\hspace{-2mm} 4.1]{bd-Mumford} by collapsing the extended Mordell--Weil group to the usual Mordell--Weil group. This explains the discrepancy between \eqref{orderVAN} and \cite[Conj.\hspace{-1mm} 4.1]{bd-Mumford}.
\end{remark}

\noindent We deduce that, when $L(E/K,1)=0$, we should expect $\NS(\tau)$ to be non-trivial only when $r_\mrm{alg}(E/K)=2$ and the rank is distributed in the two eigenspaces for the action of complex conjugation according to a rule depending on reduction type at the prime $p$: 
\begin{conjecture}\label{Conj1}
  Suppose that   $L(E/K,1)= 0$, then
  \[
  \NS(\tau)\not=0\quad\iff\quad   \begin{cases}
      r_\mrm{alg}(E/\bb{Q})=0, \quad  r_{\rm alg}(E'/\Q)=2\quad  &\mathrm{ if}\hspace{2mm} a_p(E)=+1,\\
       r_\mrm{alg}(E/\bb{Q})=1,\quad r_{\rm alg}(E'/\Q)=1\quad &\mathrm{ if}\hspace{2mm} a_p(E)=-1.
  \end{cases}
  \]
\end{conjecture}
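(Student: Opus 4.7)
The plan is to combine Theorem \ref{prop:BD-old} with the anticyclotomic Birch and Swinnerton-Dyer conjecture \eqref{orderVAN} from \cite{bd-Mumford} and an analysis of the eigenspace in which $\NS(\tau)$ lives under complex conjugation on $K$. By Theorem \ref{prop:BD-old}, $\NS(\tau) = L_p'(E,K,1)$ while $L_p(E,K,1) = 0$, so the non-vanishing of $\NS(\tau)$ is equivalent to the order of vanishing $\varrho$ of $L_p(E,K,s)$ at $s=1$ being exactly $1$.

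First I would extend the eigenspace computation used in the proof of Theorem \ref{application-nonSelmer}: the defining integral $\int_{\overline{\tau}_p}^{\tau_p}\omega_f[\tau_\infty]$ swaps its endpoints under the action of complex conjugation, and combined with the Shimura reciprocity of Proposition \ref{prop:properties-omegatau} this shows that $\NS(\tau)$ lies in the $(-a_p(E))$-eigenspace for complex conjugation on $H^1(K,T_p(E)) \otimes K_p$. Under the decomposition $E(K) \otimes \Q_p = E(\Q) \otimes \Q_p \oplus E'(\Q) \otimes \Q_p$, the eigenspace hosting $\NS(\tau)$ is therefore supported by $E'(\Q) \otimes \Q_p$ when $a_p(E) = +1$ and by $E(\Q) \otimes \Q_p$ when $a_p(E) = -1$. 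Denote the relevant curve by $E^\bullet$; note that $a_p(E^\bullet) = -1$, so $\delta^\bullet = 0$. Non-triviality of $\NS(\tau)$ as an element of its eigenspace then requires $r_\mrm{alg}(E^\bullet/\Q) \geq 1$.

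Next, applying \eqref{orderVAN} with the sign choice dictated by the functional-equation parity of $L_p(E,K,s)$ in the non-split exceptional indefinite case yields $\varrho = \max\{1, r_\mrm{alg}(E^\bullet/\Q) - 1\}$, so $\varrho = 1$ is equivalent to $r_\mrm{alg}(E^\bullet/\Q) \leq 2$. Combined with the lower bound from the eigenspace analysis, this gives $r_\mrm{alg}(E^\bullet/\Q) \in \{1,2\}$. The exact value is pinned down by the parity of the global root number of $L(E^\bullet/\Q,s)$, computed from $a_p(E^\bullet) = -1$, the sign at infinity, and the signs contributed by ramified primes of $K/\Q$: this produces an even root number in the split case and an odd root number in the non-split case, forcing $r_\mrm{alg}(E')=2$ when $a_p(E)=+1$ and $r_\mrm{alg}(E)=1$ when $a_p(E)=-1$. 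The complementary rank is then determined by the assumption $L(E/K,1)=0$ combined with the factorization $L(E/K,s) = L(E/\Q,s) L(E'/\Q,s)$: in the split case, $r_\mrm{alg}(E') = 2$ already accounts for the zero of $L(E/K,1)$, and parity together with the absence of further rank on the opposite eigenspace pins down $r_\mrm{alg}(E/\Q) = 0$; in the non-split case, $r_\mrm{alg}(E) = 1$ provides only half of the vanishing of $L(E/K,1)$, so parity and the factorization force $r_\mrm{alg}(E'/\Q) = 1$. The converse implication follows by reading the chain of equivalences backwards, using the BSD prediction that the leading term of $L_p(E,K,s)$ is genuinely non-zero under the stated rank hypothesis.

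The main obstacle is that the entire argument is conditional on the anticyclotomic Birch and Swinnerton-Dyer conjecture \eqref{orderVAN} and, still more delicately, on the non-degeneracy of the anticyclotomic $p$-adic regulator that computes the leading term of $L_p(E,K,s)$ — an instance of the widely-open non-vanishing problem for $p$-adic height pairings that underlies essentially every existing application of $p$-adic $L$-functions to rank-two settings. Any unconditional partial progress would most naturally proceed through Euler-system and Kolyvagin-derivative techniques in the spirit of the proof of Theorem \ref{SelmerClass}, enhanced by a second-order reciprocity law relating $\NS(\tau)$ to global points on $E^\bullet$, along the lines of the plectic constructions of \cite{fornea-gehrmann} and \cite{fgm}.
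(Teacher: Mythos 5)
This statement is a conjecture, and the paper offers only the heuristic that you reproduce: Theorem \ref{prop:BD-old} identifies $\NS(\tau)$ with $L_p'(E,K,1)$, and the anticyclotomic Birch--Swinnerton-Dyer conjecture \eqref{orderVAN}, combined with parity and the distribution of the rank of $E(K)$ between the two eigenspaces of complex conjugation, predicts exactly when the order of vanishing is $1$. Your reasoning follows the same route as the paper's motivating discussion (you merely make the parity, root-number and eigenspace bookkeeping explicit), and, like the paper's, it is a conditional prediction rather than a proof.
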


In order to make explicit the relation between the mock plectic invariant $ \NS(\tau)$ and global points in $E(K)$, denote by  $\log_{E}^{a_p}\colon E(K_p)\to K_p$ the composition of the $p$-adic logarithm of $E$ with the endomorphism $(1-a_p(E)\cdot\sigma_p)$ of $E(K_p)$, where $\sigma_p\in\text{Gal}(K_p/\bb{Q}_p)$ denotes the non-trivial involution. 
\begin{conjecture}\label{Conj2}
  Suppose that   $L(E/K,1)= 0$. Then
  $\NS(\tau)$ is in the image of the regulator $ \wedge^2E(K)\to H^1_f(K,T_p(E))\otimes_{\Z_p} K_p$  given by
    \[
P\wedge Q\mapsto \delta_\infty(P)\otimes \log_{E}^{a_p}(Q)-\delta_\infty(Q)\otimes\log_{E}^{a_p}(P).
  \]
\end{conjecture}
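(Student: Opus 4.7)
Starting from the formula \eqref{L-as-Kol}, which presents $\NS(\tau)$ as the limit of Kolyvagin derivatives $\lim_n \sum_\alpha y_n^{\mathrm{rec}(\alpha)} \otimes \log\langle\alpha\rangle$ of the norm-compatible Heegner points $y_n$, the strategy is to identify this class with the image of a genuine element of $\wedge^2 E(K)$ under the regulator map, via an anticyclotomic $p$-adic Gross--Zagier formula in the exceptional setting. The hypothesis $L(E/K,1)=0$ already forces the collection $\{y_n\}$ to be divisible modulo universal norms (as in the argument for Theorem \ref{SelmerClass}), so that $\NS(\tau)$ lands in $H^1_f(K,T_p(E)) \otimes K_p$. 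A further key point is that $\NS(\tau)$ is constrained to a specific eigenspace for complex conjugation, selected by the reduction type $a_p(E)$ that also governs the rank distribution in Conjecture \ref{Conj1}, and the proposed regulator image lands in the same eigenspace.

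Assuming Conjecture \ref{Conj1}, one picks generators for the two complex-conjugation eigenspaces of $E(K) \otimes \Q_p$: when $a_p(E)=-1$, take $P\in E(\Q)$ and $Q\in E'(\Q)$; when $a_p(E)=+1$, take $P,Q$ spanning $E'(\Q) \otimes \Q_p$. The twist $(1 - a_p(E)\sigma_p)$ built into $\log_E^{a_p}$ precisely selects the surviving contributions in each case, and the image of $P \wedge Q$ generates a one-dimensional subspace of $H^1_f(K,T_p(E)) \otimes K_p$ containing $\NS(\tau)$ by the eigenspace analysis above. Non-degeneracy of the Bloch--Kato logarithm on this line then reduces the identity to the matching of a single scalar, which should be pinned down by applying the dual-exponential map to both sides and invoking an anticyclotomic $p$-adic Gross--Zagier formula to compute $L_p''(E,K,1)$ (extracted from Theorem \ref{prop:BD-old} by a further differentiation along the anticyclotomic variable) as a product of $p$-adic logarithms of global points.

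The main obstacle is the absence of such a Gross--Zagier formula in the non-split exceptional case, where $p$ is inert in $K$ and $E$ has multiplicative reduction at $p$. Any proof of it would need to blend the Mazur--Tate--Teitelbaum exceptional-zero mechanism (responsible for the twisting operator $(1 - a_p(E)\sigma_p)$ in $\log_E^{a_p}$) with the standard second-period Gross--Zagier regulator computation, in a framework where the mock Hilbert periods $F_f[\tau_\infty]$ mix complex and $p$-adic integration in a delicate way. Even granting this, converting the eventual identity between $L_p''(E,K,1)$ and a regulator into a statement about the Selmer class $\NS(\tau)$ itself (rather than about its image under the dual-exponential) would likely require an anticyclotomic Iwasawa main conjecture for $(E,K)$ in this exceptional inert multiplicative setting, which is not yet available.
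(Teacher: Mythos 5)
The statement you are trying to prove is Conjecture~\ref{Conj2} of the paper: it is stated as an open conjecture, and the authors offer no proof of it. Their only supporting evidence is heuristic -- the analogy with the plectic $p$-adic invariants of \cite{fgm}, \cite{fornea-gehrmann} (they explicitly invite comparison with \cite[Conjectures 1.5 $\&$ 1.3]{fgm}) and consistency with the anticyclotomic Birch--Swinnerton-Dyer conjecture of \cite[Conj.~4.1]{bd-Mumford} via Theorem~\ref{prop:BD-old}. So there is no paper proof to compare yours against, and a complete argument here would be a substantial new theorem, not a routine verification.

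Your write-up is, by its own admission, not a proof: it is conditional on Conjecture~\ref{Conj1} (itself open), on a hypothetical anticyclotomic $p$-adic Gross--Zagier formula in the non-split exceptional (inert, multiplicative) case, and ultimately on an anticyclotomic Iwasawa main conjecture -- none of which exist in the literature. Those are genuine gaps, and each one is essentially as deep as the conjecture itself. Two further points to sharpen. First, reducing ``$\NS(\tau)$ lies in the image of the regulator'' to ``matching a single scalar'' presupposes that you already know $\NS(\tau)$ lies on the $K_p$-line spanned by $\delta_\infty(P)\otimes\log_E^{a_p}(Q)-\delta_\infty(Q)\otimes\log_E^{a_p}(P)$; the eigenspace constraint under complex conjugation cuts the ambient Selmer space down but does not by itself isolate a line unless one also knows the Selmer group has $\Q_p$-rank exactly $2$ (i.e., finiteness of $\Sha[p^\infty]$ in this setting), which you do not address. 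Second, your appeal to $L_p''(E,K,1)$ is off target: Theorem~\ref{prop:BD-old} identifies $\NS(\tau)$ with the \emph{first} derivative $L_p'(E,K,1)$, and the relevant (conjectural) leading-term formula in the framework of \cite{bd-Mumford} is for that first derivative, expressed through the extended Mordell--Weil group and degenerate anticyclotomic height pairings; introducing a second differentiation is not how the paper's own heuristic proceeds and would require yet another unproven interpolation statement. In short, your text is a reasonable survey of what a proof would need, but it does not close any of the gaps, and the paper deliberately leaves the statement as a conjecture.
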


\begin{remark}
    Given the analogy between mock plectic invariants and the plectic $p$-adic invariants of \cite{fgm}, \cite{fornea-gehrmann},  the reader is invited to compare Conjectures \ref{Conj1} and \ref{Conj2} with \cite[Conjectures 1.5 $\&$ 1.3]{fgm}. 
\end{remark}

The fact that the plectic invariant is forced to lie in a specific eigenspace for
complex conjugation and can sometimes vanish for trivial reasons (for example, when $E(\Q)$ has rank two) suggests that  it is only ``part of the story" and represents the projection of a more complete invariant which should be non-trivial in all scenarios when $E(K)$ has rank two. 
The authors believe that a full  mock analogue  of plectic Stark--Heegner points 
can be obtained by exploiting the $p$-adic deformations of $f$ arising 
from  Hida theory, and hope to treat this idea in future work. These ``mock plectic points'' should   control the arithmetic of elliptic curves of rank two over quadratic imaginary fields, and be unaffected by the degeneracies of anticyclotomic height pairings that plague \cite[Conjecture 4.1]{bd-Mumford}.

\medskip
The approach of this article might also be exploited to upgrade the  plectic Heegner points of \cite{fornea-gehrmann} from tensor products of $p$-adic points  to global cohomology classes. Such an improvement would take care of the degeneracies of the original construction (evoked, for example, in \cite[Remark 1.1]{fornea-gehrmann}) and, in light of Theorems \ref{SelmerClass} and \ref{application-nonSelmer}, it would help explain the arithmetic meaning of plectic Stark--Heegner points in the setting considered in \cite[Remark 1.3]{fornea-gehrmann}. (See also the paragraph before \cite[Conjecture 1.6]{fgm}.)


\end{document}